\newtheoremstyle{mytheoremstyle} 
    {5pt}                    
    {5pt}                    
    {\itshape}                   
    {\parindent}                           
    {\bf}                   
    {.}                          
    {.5em}                       
    {}  
\theoremstyle{mytheoremstyle}
\newtheorem{theorem}{Theorem}[section]
\newtheorem{lemma}[theorem]{Lemma}
\newtheorem{corollary}[theorem]{Corollary}
\newtheoremstyle{mytdefintionstyle} 
    {5pt}                    
    {5pt}                    
    {\rm}                   
    {\parindent}                           
    {\bf}                   
    {.}                          
    {.5em}                       
    {}  
\theoremstyle{remark}
\newtheorem{remark}[theorem]{Remark}
\theoremstyle{mytdefintionstyle}
\newtheorem{definition}[theorem]{Definition}
\newtheorem{example}[theorem]{Example}
\newtheorem{construction}[theorem]{Construction}
\newtheorem*{notationnonumber}{Notation}
\newtheoremstyle{exmp_contd} 
{\topsep} {\topsep}%
{\upshape}
{}
{\bfseries}
{}
{ }
{\thmname{#1}\,\thmnumber{ #2}\thmnote{#3}\enspace(continued)}
\theoremstyle{exmp_contd}
\definecolor{ExQ}{HTML}{0000FF}
\definecolor{Dec}{HTML}{E07B00}
\newcommand{\CapPkg}{\textsc{Cap}\xspace}
\DeclareMathOperator{\Proj}{Proj}
\newcommand{\AC}{\mathbf{A}}
\newcommand{\CC}{\mathbf{C}}
\newcommand{\TC}{\mathbf{T}}
\newcommand{\UC}{\mathbf{U}}
\newcommand{\N}{\mathbb{N}}
\newcommand{\Z}{\mathbb{Z}}
\newcommand{\Q}{\mathbb{Q}}
\newcommand{\C}{\mathbb{C}}
\newcommand{\Pro}{\mathbb{P}}
\newcommand{\CH}{\mathrm{H}}
\newcommand{\Syz}{\mathrm{Syz}}
\newcommand{\Coh}{\mathfrak{Coh}\,}
\newcommand{\CTate}{\mathfrak{Tate}}
\newcommand{\rank}{\mathrm{rank}}
\newcommand{\chern}{\mathrm{chern}}
\newcommand{\BGG}{\mathrm{BGG}}
\newcommand{\TateRes}{\mathrm{Tate}}
\newcommand{\Rep}{\mathrm{Rep}_{k}}
\newcommand{\Obj}{\mathrm{Obj}}
\newcommand{\im}{\mathrm{im}}
\DeclareMathOperator{\Irr}{\mathrm{Irr}}
\newcommand{\id}{\mathrm{id}}
\DeclareMathOperator{\Hom}{\mathrm{Hom}}
\DeclareMathOperator{\kernel}{\mathrm{ker}}
\DeclareMathOperator{\cokernel}{\mathrm{coker}}
\DeclareMathOperator{\image}{\mathrm{im}}
\DeclareMathOperator{\soc}{\mathrm{soc}}
\DeclareMathOperator{\sgn}{\mathrm{sgn}}
\DeclareMathOperator{\rad}{\mathrm{rad}}
\DeclareMathOperator{\Sym}{\mathrm{Sym}}
\DeclareMathOperator{\Ext}{\mathrm{Ext}}
\newcommand{\HPo}{\mathrm{HP}}
\newcommand{\kvec}{k\mathrm{\text{-}vec}}
\newcommand{\grmod}{\text{-}\mathrm{grmod}}
\newcommand{\stgrmod}[1]{\underline{{#1}\text{-}\smash{\mathrm{grmod}}}}
\newcommand{\ModLeft}{\mathrm{\text{-}mod}}
\newcommand{\GAP}{\texttt{GAP}\xspace}
\newcommand{\bfindex}[1]{{\textbf{#1}}{\index{{#1}}}}
\tikzset{round left paren/.style={ncbar=0.5cm,out=120,in=-120}}
\tikzset{round right paren/.style={ncbar=0.5cm,out=60,in=-60}}
\newcolumntype{C}[1]{>{\centering\arraybackslash$}p{#1}<{$}}
\newlength{\mycolwd}
\definecolor{lightgray}{gray}{0.8}
\newcolumntype{L}{>{\raggedleft}p{0.28\textwidth}}
\newcolumntype{R}{p{0.8\textwidth}}
\definecolor{ctcolor}{gray}{0.95}
\newcommand\ct{\cellcolor{ctcolor}}
\definecolor{ctucolor}{gray}{0.85}
\newcommand\ctu{\cellcolor{ctucolor} ?}
\newcommand\ctuc{\cellcolor{ctucolor}}
\newcommand{\thickhline}{%
    \noalign {\ifnum 0=`}\fi \hrule height 1pt
    \futurelet \reserved@a \@xhline
}
\newcolumntype{"}{@{\hskip\tabcolsep\vrule width 1pt\hskip\tabcolsep}}
\author{Sebastian Posur}
\address{Department of mathematics, University of Siegen, 57068 Siegen, Germany}
\email{\href{mailto:Sebastian Posur <sebastian.posur@uni-siegen.de>}{sebastian.posur@uni-siegen.de}}
\begin{document}

\title[Constructing equivariant vector bundles via the BGG correspondence]{Constructing equivariant vector bundles via the BGG correspondence}

\begin{abstract}
We describe a strategy for the construction of 
finitely generated $G$-equivariant $\Z$-graded modules $M$
over the exterior algebra for a finite group $G$.
By an equivariant version of the BGG correspondence, $M$
defines an object $\mathcal{F}$ in the bounded derived category
of $G$-equivariant coherent sheaves on projective space.
We develop a necessary condition for $\mathcal{F}$ being isomorphic to a vector bundle
that can be simply read off from the Hilbert series of $M$. 
Combining this necessary condition with the computation of
finite excerpts of the cohomology table of $\mathcal{F}$
makes it possible to enlist a class of
equivariant vector bundles on $\Pro^4$
that we call strongly determined
in the case where $G$ is the alternating group on $5$ points.
\end{abstract}

\keywords{%
BGG correspondence, equivariant vector bundle, Grothendieck group, cohomology table%
}
\subjclass[2010]{%
14F05, 
16E05, 
16E20
}
\maketitle

\tableofcontents

\section{Introduction}

Interesting problems concerning vector bundles on projective space
have remained open for many years.
For example, Hartshorne conjectures in \cite{Har74}
that there are no indecomposable vector bundles of rank $2$ on $\Pro_{\C}^n$
for $n \geq 7$, which is equivalent to
the conjecture that every codimension $2$ nonsingular subvariety of $\Pro_{\C}^n$
is a complete intersection for $n \geq 7$.
Furthermore, the only known rank $2$ indecomposable vector bundle on $\Pro_{\C}^4$
is the famous Horrocks-Mumford bundle \cite{HM73} (up to twists by line bundles
and pullbacks by finite maps \cite{DS87}). It is unclear whether there exist more such vector bundles.

The purpose of this paper is 
to provide tools for the construction of $G$-equivariant vector bundles on projective space for a finite group $G$
over a splitting field $k$.
The main idea is to
start with a triple of $\Z$-graded \emph{irreducible} $k$-representations $V, T, U$ 
of $G$ concentrated in degrees $-1, t,u \in \Z$ (where we assume $t \not = u$),
and to consider $G$-equivariant $E$-linear graded maps of the form
\[
 \widehat{\phi}: E \otimes_k (\bigwedge^{n+1}W \otimes_k T) \longrightarrow E \otimes_k (\bigwedge^{n+1}W \otimes_k U),
\]
where $\dim_k( V ) = n+1$,
$E = \bigwedge V$ is the graded exterior algebra of $V$, and $W := \Hom_k(V, k)$ the graded $k$-dual.
Due to an equivariant version of the famous BGG correspondence,
the module $M = \kernel\widehat{\phi}$ corresponds to an object $\mathcal{F}$ in the bounded derived category of $G$-equivariant coherent sheaves on $\Pro V$
with some hypercohomology groups prescribed by $T$ and $U$.
The tools described in this paper will then help us with the decision whether $\mathcal{F}$ is isomorphic to a vector bundle.

The paper is organized as follows.
Section \ref{section:BGG} gives the theoretical background of our construction strategy.
We start with a review of the classical BGG correspondence in Subsection \ref{subsection:classical_bgg}
and show how it can be used for the computation of hypercohomology groups (Subsection \ref{subsection:hypercohomology}).
In Subsection \ref{subsection:construction_strategy}
we state our construction strategy of objects $\mathcal{F}$ in $D^b( \Coh \Pro V )$ in the 
non-equivariant case, where $\Coh\Pro V$ is the category of coherent sheaves on $\Pro V$.
The next two subsections are concerned with the question when $\mathcal{F}$
is isomorphic to a vector bundle.
This question can in theory be decided by finding two particularly sparse columns in the
cohomology table of $\mathcal{F}$ (Lemma \ref{lemma:sufficient_condition}),
but since we cannot in practice compute the entire cohomology table 
for finding (or disproving the existence) of two such columns,
we also give a necessary condition in Corollary \ref{corollary:necessary_condition}
that can be simply read off from the Hilbert series of $M = \kernel\widehat{\phi}$
(and thus is computationally quite cheap).
Subsection \ref{subsection:equivariant} and Subsection \ref{subsection:adaption}
lift the BGG correspondence and our construction strategy to the $G$-equivariant setup.

In Section \ref{section:A5} we apply our tools
in the case where $G$ is the alternating group $A_5$ on $5$ points
and $V$ is its unique irreducible representation of dimension $5$.
We construct all those $G$-equivariant vector bundles 
for which the corresponding map $\widehat{\phi}$
is already determined (up to non-zero scalar)
by its source and range. We call such vector bundles \textbf{strongly determined}.
Interestingly, in the case $G = A_5$, all such bundles turn out to have supernatural cohomology,
a notion of importance in Boij-Söderberg theory \cite{ES09}.
We summarize our findings in Theorem \ref{theorem:A5_main_theorem}.

All necessary computations for obtaining Theorem \ref{theorem:A5_main_theorem}
were performed with \CapPkg \cite{CAP-project},
a software project for category theory implemented in \textsf{GAP} \cite{GAP4}.
We address our strategy to compute in a $G$-equivariant $E$-linear
setup on the computer in Appendix \ref{appendix}.

\begin{notationnonumber}
 By a module we will always mean a left module.
 A graded module is understood as a $\Z$-graded module over a $\Z$-graded ring.
 Given a graded module $M = \bigoplus_{i \in \Z}M_i$, its \textbf{twist}
 by $d \in \Z$ is denoted by $M(d) = \bigoplus_{i \in \Z}M_{d+i}$.
 Similarly, the twist by $d$ of a coherent sheaf $\mathcal{F}$
 (or more generally of a cochain complex of coherent sheaves)
 on projective space
 is denoted by $\mathcal{F}(d) = \mathcal{F} \otimes \mathcal{O}(d)$.
 If we consider an object $T$ in a triangulated category $\mathcal{T}$,
 its \textbf{translation} by $d$ is denoted by $T[d]$.
 If we say a property holds for $\mathcal{F}$ \textbf{up to translation and twist}, e.g., being isomorphic to a vector bundle
 with Chern polynomial given by $1+2h^2+2h^3$,
 we mean that this property is true for at least one of the objects in $\{ \mathcal{F}(d)[e] \mid d,e \in \Z \}$.
\end{notationnonumber}

\section{Constructing equivariant vector bundles via the BGG correspondence}\label{section:BGG}

\subsection{The BGG correspondence}\label{subsection:classical_bgg}

Let $k$ be a field, $n \in \N_0$, and $V$ be an $(n+1)$-dimensional
$\Z$-graded $k$-vector space concentrated in degree $-1$. We denote by $W$ the graded $k$-dual
of $V$ which is necessarily concentrated in degree $1$.
We write $E := \bigwedge V$ for the graded exterior algebra of $V$,
and $S := \Sym( W )$ for the graded symmetric algebra of $W$.
Let $E\grmod$ denote the abelian category of finitely generated graded $E$-modules.
Its corresponding stable category $\stgrmod{E}$ 
is the quotient category of $E\grmod$ modulo those morphisms that factor
over a projective object. 
Since $E\grmod$ is a Frobenius category, i.e., it has enough projectives and injectives,
and these two classes of objects coincide, $\stgrmod{E}$ can be equipped with a triangulated structure \cite[Theorem 2.9]{GMAlgebraV}.
If we regard a graded $E$-module $M$ as an object
in $\stgrmod{E}$, we shall write $\underline{M}$.
Furthermore, we denote the category of coherent sheaves on projective space $\Pro V := \Proj( S )$
by $\Coh \Pro V$, and its bounded derived category by $D^b( \Coh \Pro V )$.
\begin{theorem}[BGG]
 There is an equivalence of triangulated categories
 \[
  \BGG: \stgrmod{E} \xrightarrow{\sim} D^b( \Coh \Pro V )
 \]
 that maps $\underline{k(i)}$ to $\mathcal{O}_{\Pro V}(i)[-i]$ for all $i \in \Z$.
 Here, we regard $k \simeq E/ \rad(E)$ as a graded $E$-module concentrated in degree $0$.
\end{theorem}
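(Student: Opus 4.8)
This is the classical BGG correspondence in its sheaf-theoretic form --- the statement underlying the relation between free resolutions over an exterior algebra and sheaf cohomology on projective space --- so the plan is either to invoke it directly or to reconstruct it along the following lines. I would first write down the functor $\BGG$ explicitly from the Koszul bimodule, then verify in turn that it descends to the stable category, that it is triangulated, and finally that it is an equivalence by testing it on a generating set of objects. Fix dual bases $e_0,\dots,e_n$ of $V$ and $x_0,\dots,x_n$ of $W$. To a finitely generated graded $E$-module $N$ --- which is finite-dimensional over $k$, since $E=\bigwedge V$ is, hence concentrated in finitely many internal degrees --- I attach the complex $\RF(N)$ of finitely generated graded free $S$-modules whose term in cohomological degree $p$ is the free module on the $k$-vector space $N_{-p}$, twisted so that a basis vector of $N_{-p}$ lies in internal degree $-p$, and with differential $s\otimes m\mapsto\sum_a x_a s\otimes e_a m$; this squares to zero because the $x_a$ commute in $S$ while the $e_a$ anticommute in $E$. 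Sheafifying termwise yields a bounded complex of vector bundles on $\Pro V=\Proj S$, and one sets $\BGG(\underline N):=\widetilde{\RF(N)}\in D^b(\Coh\Pro V)$. With this normalization a direct check gives $\BGG(\underline{k(i)})=\mathcal{O}_{\Pro V}(i)[-i]$: since $k(i)$ is concentrated in internal degree $-i$, the complex $\RF(k(i))$ is the single free module $S(i)$ placed in cohomological degree $i$, which sheafifies to $\mathcal{O}_{\Pro V}(i)$.

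The next step is to see that $\BGG$ descends to $\stgrmod{E}$ and is triangulated. The key point is that $\RF$ sends a free $E$-module to an acyclic complex of sheaves: $\RF(E)$ is, up to twists, the complex $S\to S\otimes_k V\to S\otimes_k\bigwedge^2 V\to\cdots\to S\otimes_k\bigwedge^{n+1}V$ with Koszul differential, whose cohomology modules all have finite length, so $\widetilde{\RF(E)}$ is acyclic, and by additivity so is $\widetilde{\RF(P)}$ for every finitely generated projective $E$-module $P$. Hence morphisms that factor through a projective are sent to zero, and $\BGG$ is well defined on $\stgrmod{E}$. Triangulatedness then follows from the Frobenius-category formalism: a short exact sequence of graded $E$-modules is split in each internal degree, so $\RF$ takes it to a short exact sequence of complexes of free $S$-modules split in each cohomological degree, which after sheafification is a distinguished triangle; and compatibility with the translation functors comes from the injective-hull sequence $0\to N\to I\to\Omega^{-1}N\to 0$ together with the acyclicity of $\widetilde{\RF(I)}$, giving a natural isomorphism $\BGG(\underline{\Omega^{-1}N})\cong\BGG(\underline N)[1]$ --- consistent with $\underline{k(i)}\mapsto\mathcal{O}_{\Pro V}(i)[-i]$.

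It remains to prove that $\BGG$ is an equivalence, and here I would reduce everything to the objects $\underline{k(i)}$. Every finitely generated graded $E$-module has a finite filtration with subquotients of the form $k(i)$ --- $E$ being a finite-dimensional graded algebra with $E/\rad E\cong k$ --- so the $\underline{k(i)}$, $i\in\Z$, generate $\stgrmod{E}$ as a triangulated category; on the other side the line bundles $\mathcal{O}_{\Pro V}(i)$ generate $D^b(\Coh\Pro V)$, e.g., by Beilinson's resolution of the diagonal. Since $\BGG$ is triangulated and carries the first generating family to the second, essential surjectivity will follow once full faithfulness is known, and full faithfulness may be tested on generators: after twisting one must identify, for all $d,\ell\in\Z$,
\[
\Hom_{\stgrmod{E}}\bigl(\underline k,\underline{k(d)}[\ell]\bigr)=\widehat{\Ext}^{\ell}_E\bigl(k,k(d)\bigr)
\quad\text{with}\quad
\Hom_{D^b}\bigl(\mathcal{O}_{\Pro V},\mathcal{O}_{\Pro V}(d)[\ell-d]\bigr)=\CH^{\ell-d}\bigl(\Pro V,\mathcal{O}_{\Pro V}(d)\bigr).
\]
For $\ell\geq 1$ the Tate group on the left coincides with ordinary $\Ext$, and Koszul duality --- $\Ext^\bullet_E(k,k)\cong\Sym W=S$, with $\Ext^\ell_E(k,k)$ sitting in internal degree $\ell$ because $V$ has degree $-1$ --- makes it nonzero precisely for $d=\ell$, where it equals $S_\ell$, matching $\CH^0(\Pro V,\mathcal{O}_{\Pro V}(\ell))$; the remaining groups vanish. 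For $\ell\leq 0$ one uses that $E$ is a Frobenius algebra --- indeed Gorenstein of dimension $0$ --- with socle $\soc E=\bigwedge^{n+1}V$ concentrated in internal degree $-(n+1)$, so that the negative part of the complete cohomology $\widehat{\Ext}^\bullet_E(k,k)$ is an internal-degree dual of $S$ shifted by $n+1$; this reproduces the top cohomology $\CH^n(\Pro V,\mathcal{O}_{\Pro V}(d))$ via Serre duality on $\Pro V$, the remaining degrees again vanishing on both sides.

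I expect this last matching to be the genuine obstacle: lining up the negative half of the Tate cohomology of $k$ over $E$ with the top sheaf cohomology on $\Pro V$, with all the internal twists, cohomological shifts, and the $(n+1)$-shift coming from the two duality operations in exact agreement; by contrast, the construction of $\BGG$, its descent to $\stgrmod{E}$, and its triangulatedness are essentially bookkeeping. Once $\BGG$ induces isomorphisms on all $\Hom$-groups among the $\underline{k(i)}$ and their shifts, a standard inductive argument along the triangulated subcategory they generate upgrades this to full faithfulness on all of $\stgrmod{E}$, and hence --- the generators lying in the essential image --- to an equivalence; alternatively, the adjoint functor $\LF$ from complexes of graded $S$-modules to complexes of graded $E$-modules, combined with sheafification, provides an explicit quasi-inverse.
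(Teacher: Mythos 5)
The paper itself contains no proof of this statement: it is quoted as the classical BGG theorem, and the proof is delegated to \cite{BGG} and \cite[Appendix]{OSS} (see also \cite{EFS}). Your reconstruction rests on the same standard construction that underlies those references --- the linear-complex functor with terms $S(p)\otimes_k N_{-p}$ and Koszul differential, the observation that free $E$-modules are sent to complexes with finite-length cohomology (hence sheaf-acyclic), so the functor kills projectives, descends to $\stgrmod{E}$ and is triangulated, and the normalization $\underline{k(i)}\mapsto\mathcal{O}_{\Pro V}(i)[-i]$ --- but you complete the equivalence by a different route: the cited sources work with the adjoint pair of functors between graded $E$-modules (or their Tate resolutions) and linear complexes of free $S$-modules and show that the adjunction maps are quasi-isomorphisms, whereas you argue by d\'evissage on the generators $\underline{k(i)}$ and $\mathcal{O}_{\Pro V}(i)$, reducing everything to a comparison of $\Hom$-groups. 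That route is legitimate and has the virtue of needing only the cohomology of line bundles on $\Pro V$ and the Tate cohomology of $k$ over $E$; its cost is exactly the point you flag as the obstacle: full faithfulness requires that the maps which $\BGG$ induces on $\Hom(\underline{k},\underline{k(d)}[\ell])$ be isomorphisms, not merely that the two sides have the same dimension, so in the positive range you should verify the case $\ell=d=1$ directly and then use compatibility with composition together with the fact that $\Ext^{\bullet}_E(k,k)\simeq S$ is generated in degree $1$, while in the negative range the Frobenius duality of $E$ (socle in internal degree $-(n+1)$) and Serre duality on $\Pro V$ have to be chased through the same comparison map --- bookkeeping that the adjunction-based proofs of the references avoid. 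With that caveat your outline is sound and is, in substance, the proof of the quoted sources rather than a new one.
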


The BGG correspondence was first proved in \cite{BGG}.
A more detailed construction and proof of its correctness can be found in \cite[Appendix]{OSS}.

\subsection{Tate sequences and hypercohomology}\label{subsection:hypercohomology}

The BGG correspondence provides a great computational tool for sheaf cohomology (Theorem \ref{theorem:BGG_and_cohomology})
that we are now going to explain. We will use the more general language of Frobenius categories
and stable categories in the sense of \cite{GMAlgebraV} (since this language also applies to the equivariant setup discussed in Subsection \ref{subsection:equivariant}),
but it is safe to simply think of $E\grmod$ and $\stgrmod{E}$.

\begin{definition}
 Let $\AC$ be a Frobenius category
 and let $K^{\bullet}(\AC)$ denote its category of cochain complexes with cochain homotopy classes
 as morphisms. The \textbf{category of Tate sequences} $\CTate( \AC )$
 is defined as the full subcategory of $K^{\bullet}(\AC)$
 generated by (possibly infinite) acyclic cochain complexes only having projective objects
 in each cohomological degree. 
\end{definition}

Note that $\CTate( \AC )$ inherits a triangulated structure from $K^{\bullet}(\AC)$.
Moreover, sending a cochain complex in $\CTate(\AC)$ to its $0$-th syzygy object
yields a functor 
\[
\Syz: \CTate( \AC ) \rightarrow \underline{\AC}
\]
mapping into the stable category of $\AC$.
We also get a functor the other way around
using the following construction.
\begin{construction}\label{construction:Tate_res}
 Let $\underline{A} \in \underline{\AC}$. Take a projective resolution $P^{\bullet} \twoheadrightarrow A$
 and an injective resolution $P \hookrightarrow I^{\bullet}$. Joining both resolutions with the differential
 \[
  P^0 \twoheadrightarrow A \hookrightarrow I^0
 \]
 (having cohomological degree $0$)
 yields an acyclic cochain complex of projectives, thus a Tate sequence, denoted by $\TateRes( \underline{A} )$.
\end{construction}

 See \cite[Chapter 5, 2.9.1]{GMAlgebraV} for the following folklore theorem.
\begin{theorem}\label{theorem:stable_cat}
 Let $\AC$ be a Frobenius category.
 Then the two constructions $\Syz$ and $\TateRes$ yield an equivalence of triangulated categories
 \[
   \CTate( \AC ) \simeq \underline{\AC}.
 \]
\end{theorem}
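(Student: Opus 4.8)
The plan is to check directly that $\Syz$ and $\TateRes$ are well-defined functors, that the two composites are naturally isomorphic to the respective identities, and finally that one of them — say $\Syz$ — is exact, whence its quasi-inverse is exact too, so that together they form an equivalence of triangulated categories. For the well-definedness of $\Syz$ the point is that it descends to cochain homotopy classes: if $h$ is a homotopy between chain maps $f,g\colon T^{\bullet}\to U^{\bullet}$ of Tate sequences, then restricting the identity $(f-g)^{0}=d_{U}^{-1}h^{0}+h^{1}d_{T}^{0}$ to the $0$-th cocycle object $Z^{0}(T^{\bullet})=\kernel(d_{T}^{0})$ kills the second summand, so $(f-g)^{0}|_{Z^{0}(T^{\bullet})}$ factors through the projective $T^{-1}_{U}$ via $d_{U}^{-1}$ (whose image is $Z^{0}(U^{\bullet})$ by acyclicity), and hence vanishes in $\underline{\AC}$; functoriality is then clear. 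For $\TateRes$: projective resolutions, and injective resolutions, are unique up to cochain homotopy equivalence, and these equivalences glue along the splicing map $P^{0}\twoheadrightarrow A\hookrightarrow I^{0}$, so $\TateRes(\underline{A})$ is well-defined up to canonical isomorphism in $\CTate(\AC)$; a morphism in $\underline{\AC}$ is represented by some $\phi\colon A\to B$, which lifts to a chain map of spliced resolutions by the lifting property of projectives and the extension property of injectives, uniquely up to homotopy; and if $\phi$ factors through a projective $P$, then $P$ is also injective, $\TateRes(\underline{P})$ is contractible, and the lift of $\phi$ factors through it and so is nullhomotopic. Thus $\TateRes$ descends to a functor $\underline{\AC}\to\CTate(\AC)$.

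Next I would analyze the two composites. Directly from Construction \ref{construction:Tate_res} the $0$-th cocycle object of $\TateRes(\underline{A})$ is $A$ itself, so $\Syz\circ\TateRes\cong\Id_{\underline{\AC}}$ canonically (both on objects and, via the construction of lifts, on morphisms). For $\TateRes\circ\Syz$ the essential input — and the only place the Frobenius hypothesis is genuinely used — is that projective and injective objects of $\AC$ coincide: given a Tate sequence $T^{\bullet}$ and $A:=Z^{0}(T^{\bullet})$, the truncation $\cdots\to T^{-2}\to T^{-1}\to A\to 0$ is then a projective resolution of $A$, while $0\to A\to T^{0}\to T^{1}\to\cdots$ is an injective resolution of $A$; comparing these with the resolutions used to build $\TateRes(\underline{A})$ and splicing the resulting homotopy equivalences yields a cochain homotopy equivalence $\TateRes(\Syz(T^{\bullet}))\xrightarrow{\;\sim\;}T^{\bullet}$, which one checks to be natural in $T^{\bullet}$. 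Hence $\Syz$ and $\TateRes$ are mutually quasi-inverse equivalences of categories.

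Finally, exactness. One first notes that $\CTate(\AC)$ is a triangulated subcategory of $K^{\bullet}(\AC)$: it is closed under $[1]$ and under mapping cones, since the cone of a chain map between acyclic complexes with projective terms is again acyclic with projective terms. To see $\Syz$ is exact, observe $\Syz(T^{\bullet}[1])=Z^{1}(T^{\bullet})\cong T^{0}/Z^{0}(T^{\bullet})$, and since $T^{0}$ is injective this cokernel represents the suspension $\Sigma\,\Syz(T^{\bullet})$ in $\underline{\AC}$; naturality of this identification gives commutation with the suspension. For triangles it suffices to treat a cone triangle $T^{\bullet}\xrightarrow{f}U^{\bullet}\to\Cone(f)\to T^{\bullet}[1]$: the short exact sequence of complexes $0\to U^{\bullet}\to\Cone(f)\to T^{\bullet}[1]\to 0$ is degreewise split, and a short diagram chase using acyclicity shows it stays exact after applying $Z^{0}(-)$, producing an admissible short exact sequence $0\to Z^{0}(U^{\bullet})\to Z^{0}(\Cone f)\to Z^{1}(T^{\bullet})\to 0$ in $\AC$; by the standard description of the triangulated structure on the stable category of a Frobenius category (cf.\ \cite[Chapter 5]{GMAlgebraV}) this induces exactly the distinguished triangle $\Syz(T^{\bullet})\xrightarrow{\Syz f}\Syz(U^{\bullet})\to\Syz(\Cone f)\to\Sigma\,\Syz(T^{\bullet})$. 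Therefore $\Syz$ is an exact equivalence, and its quasi-inverse $\TateRes$ is exact as well. I expect the main obstacle to be this last bookkeeping step: matching, with the correct signs and up to rotation, the stable-category triangle obtained from the cocycle short exact sequence with the image under $\Syz$ of the cone triangle — that is, verifying that $\Syz$ is compatible with the connecting morphisms on both sides.
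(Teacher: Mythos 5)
Your argument is essentially sound, but note that the paper gives no proof of Theorem \ref{theorem:stable_cat} at all: it is quoted as folklore with a pointer to \cite[Chapter 5, 2.9.1]{GMAlgebraV}. So your proposal supplies the standard direct argument rather than a variant of the paper's. The individual steps check out: the homotopy term $d_U^{-1}h^0$ does make $(f-g)^0$ restricted to $Z^0(T^{\bullet})$ factor through the projective--injective $U^{-1}$; the comparison maps and homotopies for the two halves of $\TateRes$ do splice across $P^0\twoheadrightarrow A\hookrightarrow I^0$ (with vanishing cross term), and $\TateRes$ of a projective is contractible; acyclicity of $U^{\bullet}$ is exactly what makes $Z^0(\Cone f)\to Z^1(T^{\bullet})$ surjective. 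The step you flag at the end is indeed the only real work left: the triangle obtained from $0\to Z^0(U^{\bullet})\to Z^0(\Cone f)\to Z^1(T^{\bullet})\to 0$ is a rotation of the image of the cone triangle, so you must identify its connecting morphism (computed via the embedding $Z^0(U^{\bullet})\hookrightarrow U^0$) with $\Sigma$ of $Z^0(f)$ up to a sign absorbable into an isomorphism of triangles; this is the same routine computation as in Happel's construction of the triangulated structure on $\underline{\AC}$ \cite{Hap87}. Two small caveats: (i) Construction \ref{construction:Tate_res} literally places the splice map in cohomological degree $0$, under which reading $\kernel(d^0)=\kernel(P^0\twoheadrightarrow A)$ is the first syzygy of $A$ and $\Syz\circ\TateRes$ would be the loop functor rather than the identity (still an equivalence); your convention, with $0$-th syzygy of $\TateRes(\underline{A})$ equal to $A$, is the one the paper implicitly uses in the proof of Theorem \ref{theorem:BGG_and_cohomology}, so fix the indexing explicitly. (ii) In a general Frobenius category, which is exact but not necessarily abelian, the cocycle objects and your diagram chase must be phrased via admissible monomorphisms and epimorphisms; for the paper's abelian examples such as $E\grmod$ and its equivariant analogue your argument applies verbatim.
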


Now, let $\mathcal{F} \in D^b( \Coh \Pro V )$
correspond to $\underline{M} \in \stgrmod{E}$ via the BGG correspondence. It is easy to
see that $M$ can be chosen as a reduced $E$-module, i.e., it
does not contain a non-zero projective submodule.
If we choose a minimal projective and injective resolution in Construction \ref{construction:Tate_res},
then $\TateRes(\underline{M})$ has the property that 
tensoring with the $E-E$ bimodule $k$ yields zero differentials in all cohomological degrees.
We call such Tate sequences \textbf{minimal} and denote the minimal Tate sequence of $\mathcal{F}$ by $\TateRes( \mathcal{F} )$.

\begin{theorem}\label{theorem:BGG_and_cohomology}
 Let $\omega_E$ be the graded $k$-dual of $E$.
 The minimal Tate sequence of an object
 $\mathcal{F} \in D^b( \Coh \Pro V )$ is of the form
\begin{center}
          \begin{tikzpicture}[transform shape,mylabel/.style={thick, draw=black, align=center, minimum width=0.5cm, minimum height=0.5cm,fill=white}]
            \coordinate (r) at (3.5,0);
            \coordinate (d) at (0,-0.75);
            \node (A1) {$\dots$};
            \node (A2) at ($(A1) + (r)$) {$\omega_E \otimes_{k} \big(\bigoplus_{i=0}^n \CH^i( \mathcal{F}(-i) ) \big)$};
            \node (A3) at ($(A2) + 2*(r)$) {$\omega_E \otimes_{k} \big(\bigoplus_{i=0}^n \CH^i( \mathcal{F}(-i + 1) ) \big)$};
            \node (A4) at ($(A3) + 1.1*(r)$) {$\dots$};
            \draw[->,thick] (A2) --node[above]{$d^0$} (A3);
            \draw[->,thick] (A1) -- (A2);
            \draw[->,thick] (A3) -- (A4);
          \end{tikzpicture}
  \end{center}
  where the hypercohomology groups $\CH^i( \mathcal{F}(j) )$ are regarded as graded $k$-vector spaces concentrated in degree $j$.
  Thus, from this cochain complex, we can read off the hypercohomology groups of $\mathcal{F}$ as the socles of the objects.
\end{theorem}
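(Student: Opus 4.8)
The plan is to reduce the claim about the terms $T^{d}$ of $T:=\TateRes(\mathcal F)$ to a claim about their socles, and then to identify those socles with hypercohomology groups by transporting the computation through the two equivalences of Subsection~\ref{subsection:hypercohomology}; the statement is the classical description of the terms of a Tate resolution, recast in the present language.

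First I would observe that minimality makes each $T^{d}$ a projective — hence injective — graded $E$-module. Since $E$ is graded-commutative and Frobenius, its indecomposable injective $\omega_E$ has one-dimensional socle $\soc(\omega_E)\cong k$ concentrated in internal degree $0$, so every injective graded $E$-module $F$ is canonically $\omega_E\otimes_k\soc(F)$; and for a morphism between such modules, vanishing after $-\otimes_E k$ is equivalent to vanishing after $\soc(-)=\Hom_E(k,-)$ (both say its matrix over $E$ has entries in $\rad(E)$). Hence minimality of $T$ says exactly that the complex of graded $k$-vector spaces $\soc(T^{\bullet})$ has zero differentials, and $T^{d}=\omega_E\otimes_k\soc(T^{d})$. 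Both assertions of the theorem then follow from a natural isomorphism $\soc(T^{d})_{j}\cong\CH^{d-j}(\mathcal F(j))$ for all $d,j\in\Z$.

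To obtain this isomorphism, rewrite the right-hand side with the BGG correspondence. Write $\mathcal F=\BGG(\underline M)$ with $M$ reduced, so $\Syz(T)=\underline M$. Since $\BGG$ sends $\underline{k(m)}$ to $\mathcal O(m)[-m]$, the twist autoequivalence $(-)(1)$ of $D^{b}(\Coh\Pro V)$ corresponds under $\BGG$ to the autoequivalence $(-)(1)[1]$ of $\stgrmod E$ (one checks this on the generating objects $\underline{k(m)}$), whence $\mathcal F(j)[i]=\BGG(\underline{M(j)}[i+j])$ and
\[
\CH^{i}(\mathcal F(j))=\Hom_{D^{b}(\Coh\Pro V)}(\mathcal O,\mathcal F(j)[i])=\Hom_{\stgrmod E}(\underline k,\underline{M(j)}[i+j])=\Hom_{\stgrmod E}(\underline{k(-j)},\underline M[i+j]).
\]
By Theorem~\ref{theorem:stable_cat} this equals $\Hom_{K^{\bullet}(E\grmod)}(\TateRes(\underline{k(-j)}),T[i+j])$, the group of cochain-homotopy classes of maps into the minimal Tate sequence $T$. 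The source $\TateRes(\underline{k(-j)})$ is the $(-j)$-twist of the Koszul-type minimal Tate sequence of $k$, so in cohomological degree $0$ it is the injective hull $\omega_E(-j)$ of $k(-j)$, with socle $k(-j)$ in internal degree $j$. Using the injectivity of the terms of $T$ (to extend a map out of $k(-j)$ first over $\omega_E(-j)$, then over all of $\TateRes(\underline{k(-j)})$ by the standard comparison theorem for acyclic complexes of projectives underlying Theorem~\ref{theorem:stable_cat}) together with the minimality of $T$ (which forces a null-homotopic cochain map to vanish in cohomological degree $0$ on the socle $k(-j)$), one identifies this Hom-group with $\Hom_E(k(-j),T^{i+j})=\soc(T^{i+j})_{j}$. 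Setting $i=d-j$ gives $\soc(T^{d})_{j}\cong\CH^{d-j}(\mathcal F(j))$, hence $\soc(T^{d})=\bigoplus_{i}\CH^{i}(\mathcal F(d-i))$ with the $i$-th summand in internal degree $d-i$ (only $i\in\{0,\dots,n\}$ contributing when $\mathcal F$ is a sheaf), which together with $T^{d}=\omega_E\otimes_k\soc(T^{d})$ is the theorem.

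The main obstacle is this last step: $\Hom$ in the homotopy category between two totally acyclic, generally unbounded complexes of free $E$-modules is not computed termwise a priori, and collapsing the cochain homotopies to the single socle datum in cohomological degree $0$ genuinely uses that the Tate sequences involved are minimal \emph{both} as complexes of projectives \emph{and} as complexes of injectives. Should that comparison be awkward to make precise, I would instead run the classical argument for $\mathcal F$ a coherent sheaf: compare $T$ with the complex of free $E$-modules associated by BGG to the section module $M=\bigoplus_{j}\CH^{0}(\mathcal F(j))$, whose degree-$d$ term is visibly $\omega_E\otimes_k\CH^{0}(\mathcal F(d))$, note the two agree in cohomological degrees $\geq\reg(\mathcal F)$, and identify the failure of that complex to be acyclic — equivalently, the extra free summands $T$ carries in lower degrees — with the higher sheaf cohomology $\CH^{i}(\mathcal F(\bullet))$, $i\geq 1$, via local duality for graded $S$-modules; then pass from sheaves to an arbitrary $\mathcal F\in D^{b}(\Coh\Pro V)$ by d\'evissage along the finitely many triangles assembling $\mathcal F$ from the shifts $\mathcal H^{q}(\mathcal F)[-q]$ of its cohomology sheaves, using the long exact hypercohomology sequence and additivity of $\soc(-)$ — with a little care, since a minimal model of a mapping cone can differ from the termwise direct sum by contractible summands.
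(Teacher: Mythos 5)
Your proposal is correct and takes essentially the same route as the paper's proof: you identify $\CH^i(\mathcal{F}(j))$ with $\Hom_{D^b(\Coh\Pro V)}(\mathcal{O}_{\Pro V},\mathcal{F}(j)[i])$, transport this through the BGG correspondence and Theorem~\ref{theorem:stable_cat}, and use minimality to read the answer off as the internal degree~$j$ part of the socle of the $(i+j)$-th term of the Tate sequence. The only difference is one of packaging: you evaluate the resulting Hom-group in the homotopy category of Tate sequences directly (extension by comparison, plus the observation that module maps preserve socles and minimal differentials kill them), whereas the paper passes to the reduced syzygy module and replaces stable Hom by module Hom; your explicit handling of the twist compatibility and of the identification of injectives with $\omega_E\otimes_k\soc(-)$ merely spells out what the paper compresses into ``substituting $\mathcal{F}(j)$ for $\mathcal{F}$''.
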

\begin{proof}
 If $\mathcal{F}$ is a coherent sheaf, then this is \cite[Theorem 4.1]{EFS}.
 We present an alternative proof which directly uses the BGG correspondence and works for hypercohomology without extra effort:
 \begin{align*}
  \CH^{i}( \mathcal{F} ) &\simeq \Hom_{D^b( \Coh \Pro V )}( \mathcal{O}_{\Pro V}, \mathcal{F}[i] )\\
  &\simeq \Hom_{ \stgrmod{E}}\left( k, \Syz(   \TateRes( \mathcal{F} )[i])\right)
 \end{align*}
 Now, since we chose $\TateRes( \mathcal{F} )$ as a minimal Tate sequence, 
 its syzygy objects are reduced $E$-modules. From this,
 we conclude
 \begin{align*}
 \Hom_{ \stgrmod{E}}\left( k, \Syz(   \TateRes( \mathcal{F} )[i])\right)
  &\simeq \Hom_{ E\grmod}( k, \Syz(   \TateRes( \mathcal{F} )[i])) \\
  &\simeq \soc(\TateRes(\mathcal{F})^{i})_0
 \end{align*}
 where we use that elements in the socle of an $E$-module $M$
 correspond to $E$-module homomorphisms $k \rightarrow M$.
 Substituting $\mathcal{F}(j)$ for $\mathcal{F}$ gives the result in the general case.
\end{proof}

\begin{remark}\label{remark:in_free_terms}
 Using the natural isomorphism $\omega_E \simeq E \otimes_k \bigwedge^{n+1}W$ we can rewrite
 the objects of the Tate sequence in Theorem \ref{theorem:BGG_and_cohomology}
 in terms of free $E$-modules:
 \[
  \omega_E \otimes_{k} \big(\bigoplus_{i=0}^n \CH^i( \mathcal{F}(-i) ) \big) \simeq E \otimes_k \big(\bigwedge^{n+1}W \otimes_{k} \bigoplus_{i=0}^n \CH^i( \mathcal{F}(-i) ) \big).
 \]
\end{remark}

\subsection{The construction strategy}\label{subsection:construction_strategy}

The BGG correspondence makes it possible to construct objects
in $D^b( \Coh \Pro V )$ with prescribed hypercohomology groups.
Let $T,U$ be finite dimensional $\Z$-graded $k$-vector spaces
concentrated in degrees $t$, $u$, respectively.
Assume $t \not= u$.
Let
\[
 \phi: (\bigwedge^{n+1}W \otimes_k T) \longrightarrow E \otimes_k (\bigwedge^{n+1}W \otimes_k U)
\]
be a monomorphism of graded vector spaces.
By the universal property of free modules, $\phi$ gives rise to
a morphism of graded $E$-modules
\[
 \widehat{\phi}: E \otimes_k (\bigwedge^{n+1}W \otimes_k T) \longrightarrow E \otimes_k (\bigwedge^{n+1}W \otimes_k U)
\]
whose kernel is a reduced $E$-module
(it does not contain a non-zero projective submodule).
The natural monomorphism
\[
 \kernel( \widehat{\phi} ) \hookrightarrow E \otimes_k (\bigwedge^{n+1}W \otimes_k T)
\]
is an injective hull (since $t \not= u$).
Thus, $E \otimes_k (\bigwedge^{n+1}W \otimes_k T)$ is the first object
in a minimal injective resolution of $\kernel( \widehat{\phi} )$.
From Theorem \ref{theorem:BGG_and_cohomology} and Remark \ref{remark:in_free_terms}
we conclude 
\begin{equation}\label{equation:h0}
H^i\left(\BGG( \kernel( \widehat{\phi} ) )(-i+1) \right)
=
\left\{ 
    \begin{array}{cc}
                 T, & \text{for~}(-i + 1) = t, \\
                 0, & \text{else,}
    \end{array} 
   \right. 
\end{equation}
for all $i \in \Z$.
Since $\widehat{\phi}$ does not have to be the next step of a minimal injective
resolution, we cannot give an interpretation for $U$ in terms of hypercohomology groups as well.
However, we will improve this situation when we pass to an equivariant setup in Subsection \ref{subsection:adaption}.

In conclusion, choosing a monomorphism $\phi: (\bigwedge^{n+1}W \otimes_k T) \longrightarrow E \otimes_k (\bigwedge^{n+1}W \otimes_k U)$
defines an object in $D^b( \Coh \Pro V )$ with hypercohomology as shown above.
But since our ultimate goal is the construction of vector bundles,
we have to learn how to detect if $\BGG( \kernel( \widehat{\phi} ) )$
is isomorphic to a vector bundle.
\subsection{Cohomology tables of vector bundles}

Let $\mathcal{F} \in D^b( \Coh \Pro V )$.
To simplify notation, we set $h^i_j := \dim_k\CH^i(\mathcal{F}(j))$
and refer to $\left( h^i_j\right)_{i,j}$ as the \textbf{cohomology table of $\mathcal{F}$}
(even though the table contains the dimensions of hypercohomology groups).
We adopt the convention of \cite{EFS} and display a cohomology table
as follows:

\settowidth{\mycolwd}{$h^{n}_{n-1 + 1}$}
\[
 \begin{array}{c*{5}{C{\mycolwd}}}   
     & \ct\vdots & \vdots& \ct\vdots & \\
     & \ct& & \ct&\\[-1em]
     \dots & \ct h^2_{-3} & h^2_{-2}& \ct h^2_{-1} & \dots \\
     & \ct& & \ct&\\[-1em]
     \dots & \ct h^1_{-2}& h^1_{-1}& \ct h^1_{0}& \dots \\
     & \ct& & \ct&\\[-1em]
     \dots & \ct h^0_{-1}& h^0_{0}& \ct h^{0}_{1}& \dots \\
     & \ct& & \ct&\\[-1em]
     & \ct\vdots & \vdots& \ct\vdots & \\
 \end{array}
\]

From its cohomology table, lots of information about $\mathcal{F}$ may be deduced.
For example, the following statements are well-known:
\begin{itemize}
 \item $\mathcal{F}$ is isomorphic to a coherent sheaf if and only if
       there exists a $u_0 \in \N$ such that
       $h^i_j = 0$ for all $j > u_0$ and $i \not= 0$.
 \item $\mathcal{F}$ is isomorphic a vector bundle if and only if
       it is isomorphic to a coherent sheaf and there exists an $l_0 \in \N$ such that
       $h^i_j = 0$ for all $j < -l_0$ and $i \not= n$.
\end{itemize}
Unfortunately, these statements assume that we know the whole cohomology table,
whereas concrete computations usually give us only excerpts bounded to the left and right.
Luckily, it is true that if such an excerpt of a cohomology table of $\mathcal{F}$
``looks like'' the cohomology table of a vector bundle,
then $\mathcal{F}$ is already a vector bundle.
The following lemma makes this statement precise.
\begin{lemma}[Vector bundle criterion]\label{lemma:sufficient_condition}
 Let $\mathcal{F} \in D^b( \Coh \Pro V )$.
 Then $\mathcal{F}$ is isomorphic to a vector bundle
 if and only if
 its cohomology table
 has two columns with only one non-zero entry such that the 
 non-zero entry in the left column is in the $n$-th row and the non-zero entry in the right column is in the $0$-th row.
 \settowidth{\mycolwd}{$h^{n}_{n-1 + 1}$}
\[
 \begin{array}{c*{5}{C{\mycolwd}}}   
      & \ct\vdots & \vdots& \ct      & \vdots& \ct \vdots\\
      & \ct 0     & ?     & \ct\dots & ?     & \ct 0     \\
 & \ct h^n_l & ?     & \ct\dots & ?     & \ct 0     \\
      & \ct 0     & ?     & \ct\dots & ?     & \ct 0     \\
& \ct \vdots& \vdots& \ct      & \vdots& \ct \vdots\\
      & \ct 0     & ?     & \ct\dots & ?     & \ct 0     \\
& \ct 0     & ?     & \ct\dots & ?     & \ct h^0_r \\
      & \ct 0     & ?     & \ct\dots & ?     & \ct 0     \\
      & \ct\vdots & \vdots& \ct      & \vdots& \ct\vdots \\
 \end{array}
\]
\end{lemma}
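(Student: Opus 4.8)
The plan is to prove the two implications separately. For the ``only if'' direction I would argue directly with Serre vanishing and Serre duality; for the ``if'' direction I would read the hypothesis off the minimal Tate sequence of $\mathcal{F}$ furnished by Theorem~\ref{theorem:BGG_and_cohomology} and propagate it across the whole table using minimality. For necessity, suppose $\mathcal{F} \cong \mathcal{E}$ with $\mathcal{E}$ a vector bundle concentrated in cohomological degree $0$; we may assume $\mathcal{E}\ne 0$. Then $h^i_j = 0$ for all $i < 0$, and Serre vanishing provides an $r$ with $h^i_r = \dim H^i(\mathcal{E}(r)) = 0$ for all $i > 0$, while $h^0_r = \chi(\mathcal{E}(r)) \ne 0$ for $r \gg 0$ since the Hilbert polynomial of $\mathcal{E}$ has leading coefficient $\rank(\mathcal{E})/n! > 0$; so column $r$ is concentrated in row $0$. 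Applying this to the vector bundle $\mathcal{E}^{\vee}$ and translating back via Serre duality, $h^i_l = \dim H^{n-i}\!\big(\mathcal{E}^{\vee}(-l-n-1)\big)$, yields an $l \ll 0$ whose column is concentrated in row $n$; necessarily $l < r$.

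For sufficiency, suppose the cohomology table has a column $l$ concentrated in row $n$ and a column $r$ concentrated in row $0$, with $l < r$ (if $l = r$ then $n = 0$ and the claim is trivial). By the two well-known criteria recalled just before the statement, it is enough to prove
\[
 h^i_j = 0 \text{ for all } i \ne 0,\ j \ge r, \qquad\text{and}\qquad h^i_j = 0 \text{ for all } i \ne n,\ j \le l .
\]
I would pass to $\underline{M} := \BGG^{-1}(\mathcal{F})$ and work with its minimal Tate sequence $T^{\bullet} := \TateRes(\mathcal{F})$. By Theorem~\ref{theorem:BGG_and_cohomology} the free $E$-module $T^m$ has, for every basis vector of $\CH^i(\mathcal{F}(m-i))$, a generator of internal degree $m - i$ up to a fixed global shift; equivalently, the free generators of $T^m$ and their internal degrees are recorded by the non-zero entries $h^i_j$ on the anti-diagonal $i + j = m$, the entry in column $j$ contributing generators of internal degree $j$ (up to that shift). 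Minimality of $T^{\bullet}$ means every matrix entry of every differential lies in $\rad(E)$; since $E$ is concentrated in degrees $0,-1,\dots,-(n+1)$, this forces $d^m$ to send each generator of $T^m$ to a combination of generators of $T^{m+1}$ of strictly larger internal degree, the increase being between $1$ and $n+1$. Acyclicity gives $\kernel d^m = \image d^{m-1} \subseteq \rad(E)\cdot T^m$, so no generator of $T^m$ is annihilated by $d^m$; passing to the $E$-dual complex, which is again minimal acyclic with finitely generated free terms, yields the mirror statement that each generator of $T^m$ occurs with non-zero coefficient in $d^{m-1}$ of some generator of $T^{m-1}$ of strictly smaller internal degree.

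Translated to the cohomology table, these two facts say that a non-zero entry $h^i_j$ forces a non-zero entry one step ``down and to the right'' (the column increases by $1,\dots,n+1$ and the row decreases accordingly) and one step ``up and to the left''. The idea is then that, starting from a hypothetical non-zero $h^{i_0}_{j_0}$ with $i_0 \ne 0$ and $j_0 \ge r$, iterating these steps together with the boundedness of the occupied rows (valid since $\mathcal{F} \in D^b(\Coh \Pro V)$) forces a non-zero entry in column $r$ outside row $0$, or in column $l$ outside row $n$, contradicting the hypotheses; the symmetric range $j \le l$ is treated in the same way. Once both displayed vanishing statements are in hand, the first criterion shows that $\mathcal{F}$ is isomorphic to a coherent sheaf, and the second that this sheaf is locally free.

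I expect the propagation step to be the main obstacle. Because a differential of the minimal Tate sequence can shift internal degree by as much as $n+1$, the up-left and down-right steps may skip over columns, so landing exactly on column $r$ or column $l$ requires care, and the argument must be arranged so that the two sparse columns are used in tandem — a single one is not enough, witness $\mathcal{O}(a) \oplus \mathcal{O}(b)[2]$, which has a column concentrated in row $0$ but is not even a coherent sheaf. A possibly cleaner route is to reformulate the two given columns as Castelnuovo--Mumford-type regularity bounds on the syzygy modules of $T^{\bullet}$ and reason with those instead of chasing individual generators. The necessity direction, by contrast, is entirely routine.
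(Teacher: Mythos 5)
Your necessity argument is fine, but the sufficiency half rests on a misreading of what a ``column'' is, and this matters. In the display convention fixed just before the lemma (taken from \cite{EFS}), each displayed column collects the entries $h^i_j$ with $i+j$ \emph{constant}; that is, a column is an anti-diagonal in your $(i,j)$-coordinates and corresponds to a single term $T^m$ of the minimal Tate sequence. You instead treat a column as the set of entries with fixed twist $j$ (this is what your Serre-vanishing argument produces, and what your target vanishing statements ``$h^i_j=0$ for $i\neq 0,\ j\ge r$'' are aimed at). Under your reading the ``if'' direction is simply false: on $\Pro^2$ take $\mathcal{F}=\mathcal{O}\oplus\Omega^1_{\Pro^2}(1)[-1]$. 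Since $\Omega^1(1)$ has no cohomology and $h^1(\Omega^1)=1$ is the only non-vanishing cohomology of $\Omega^1$, the fixed-twist column $j=-1$ has its single non-zero entry in row $n=2$ and the fixed-twist column $j=0$ has its single non-zero entry in row $0$, yet $\mathcal{F}$ is not even isomorphic to a coherent sheaf. So no amount of care can make the propagation argument close the gap you yourself flag: you are trying to prove a false statement, and the obstruction you noticed (differentials can jump over fixed-twist columns) is exactly where the counterexample lives.

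With the correct reading the paper argues differently, and more rigidly than by chasing generators: if a (displayed) column $m_0$ has a single non-zero entry in row $i_0$, then $T^{m_0}\cong\omega_E\otimes_k \CH^{i_0}(\mathcal{F}(m_0-i_0))$ is free with all generators in one internal degree, so the syzygy $M:=\image(d^{m_0-1})\subseteq\rad(E)\cdot T^{m_0}$ is supported in only $n+1$ consecutive internal degrees. Since $M\hookrightarrow T^{m_0}\to T^{m_0+1}\to\cdots$ is a minimal injective resolution, $\soc(T^{m_0+i})\cong\Ext^i_E(k,M)$ is a subquotient of $\Hom_E(P^i,M)$ with $P^i$ generated in internal degree $-i$; hence every column to the right of the sparse one can only be occupied in the band of $n+1$ rows from $i_0$ downwards, and dually (applying $k$-duality to $M$) every column to the left of a sparse column is confined to the band of $n+1$ rows from its row upwards. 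Intersecting the two bands coming from the two hypothesized columns (row $n$ on the left, row $0$ on the right) forces exactly the vanishing pattern of a vector bundle, and the two well-known criteria finish the proof. Note that this support argument uses in an essential way that a sparse column is one whole term of the Tate sequence --- which is precisely the feature your fixed-twist columns lack. Your local ``down-right/up-left'' observations are correct and are minimality statements about the same Tate sequence, but by themselves they do not yield the global confinement; replacing the generator chase by the $\Ext^i_E(k,M)$ support bound is the missing idea.
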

\begin{proof}
 We only have to show the ``if'' direction, which
 can be deduced from \cite[Lemma 7.4]{EFS}.
 The reasoning is as follows.
 Given a cohomology table having an $r$-th column with only one non-zero entry $h\not=0$,
 then the non-zero entries to its right can only lie in the
 dark gray range indicated in the following picture:
 \begin{center}
          \begin{tikzpicture}[transform shape,mylabel/.style={thick, draw=black, align=center, minimum width=0.5cm, minimum height=0.5cm,fill=white}]
            \coordinate (r) at (3.5,0);
            \coordinate (d) at (0,-0.75);
            \node (A1) 
            {$
              \begin{array}{C{\mycolwd}C{\mycolwd}C{\mycolwd}C{\mycolwd}}

              \ct\vdots & \vdots     & \vdots\ct   &         \\
              \ct0      & 0          & 0      \ct  & \cdots  \\
              \ct h     &\ctu        &\ctu         & \cdots\ctuc  \\
              \ct0      &\ctu        &\ctu         & \cdots\ctuc  \\
              \ct\vdots &\ctuc\vdots &\ctuc\vdots  &       \ctuc  \\
              \ct0      &\ctu        &\ctu         & \cdots\ctuc  \\
              \ct0      &0           &0        \ct & \cdots  \\
              \ct\vdots & \vdots     &\vdots\ct    &         \\
              \end{array}
            $};
            \node (B) at ($(A1) + (r) - 1.65*(d)$) {};
            \node (C) at ($(A1) + (r) + 1.65*(d)$) {};
            \draw[decorate,decoration=brace, thick] (B) -- node[right] {$n+1~$} (C);
          \end{tikzpicture}
  \end{center}
To see this, let $M$ denote the $r$-th syzygy object of $\TateRes( \mathcal{F} )$.
Then 
\[M \hookrightarrow \TateRes( \mathcal{F} )^r \rightarrow \TateRes( \mathcal{F} )^{r+1} \rightarrow \dots \]
is a minimal injective resolution from whose socles we can read off the right part of the cohomology table.
Since taking socles of graded $E$-modules is a functorial process equivalent to applying $\Hom_{E}(k,-)$,
we have $\soc( \TateRes( \mathcal{F} )^{r+i} ) =  \Ext^i_{E}(k,M)$ for $i \geq 0$.
But this $\Ext$-group can also be computed as a subquotient of $\Hom_{E}( P^i, M )$,
where $P^{\bullet}$ is a minimal projective resolution of $k$.
Now, the dark gray range simply consists of those degrees in which the objects of $\Hom_{E}( P^{\bullet}, M )$
may be supported.
By dualizing $M$, we have the following range for the non-zero entries to the left:
 \begin{center}
          \begin{tikzpicture}[transform shape,mylabel/.style={thick, draw=black, align=center, minimum width=0.5cm, minimum height=0.5cm,fill=white}]
            \coordinate (r) at (3.5,0);
            \coordinate (d) at (0,-0.75);
            \node (A1) 
            {$
              \begin{array}{C{\mycolwd}C{\mycolwd}C{\mycolwd}C{\mycolwd}}   
                &\ct\vdots & \vdots & \ct\vdots \\
                \cdots&\ct0 & 0 & \ct0 \\
               \ctuc \cdots&\ctu & \ctu & \ct0 \\
               \ctuc &\ctuc\vdots & \ctuc\vdots & \ct\vdots \\
               \ctuc \cdots&\ctu & \ctu & \ct0 \\
               \ctuc \cdots&\ctu & \ctu & \ct h \\
                \cdots&\ct0 & 0 & \ct0 \\
                &\ct\vdots & \vdots & \ct\vdots \\
              \end{array}
            $};
            \node (B) at ($(A1) - (r) - 1.65*(d)$) {};
            \node (C) at ($(A1) - (r) + 1.65*(d)$) {};
            \draw[decorate,decoration=brace, thick] (C) -- node[left] {$n+1~$} (B);
          \end{tikzpicture}
  \end{center}
  Applying these two restrictions to the two columns of our given cohomology table,
  we can conclude that the only non-zero entries lie in the following range:
 \begin{center}
          \begin{tikzpicture}[transform shape,mylabel/.style={thick, draw=black, align=center, minimum width=0.5cm, minimum height=0.5cm,fill=white}]
            \coordinate (r) at (3.5,0);
            \coordinate (d) at (0,-0.75);
            \node (A1) 
            {$
              \begin{array}{C{\mycolwd}C{\mycolwd}C{\mycolwd}C{\mycolwd}C{\mycolwd}C{\mycolwd}C{\mycolwd}C{\mycolwd}C{\mycolwd}}   
               & \vdots & \ct\vdots& \vdots & \ct& \vdots & \ct\vdots & \vdots & \\
          \cdots & 0 &\ct0 & 0 & \cdots\ct & 0 & \ct 0 & 0 & \cdots \\
          \cdots & \ctu & h^n_l\ct & \ctu & \cdots\ctuc & \ctu &\ct 0 & 0 & \cdots \\
          \cdots& 0 & \ct 0& \ctu &\ctuc \cdots & \ctu & \ct 0 & 0 & \cdots \\
          & \vdots &\ct\vdots & \ctuc \vdots& \ctuc & \ctuc\vdots & \ct \vdots & \vdots & \\
          \cdots& 0 & \ct 0& \ctu &\ctuc \cdots & \ctu & \ct 0 & 0 & \cdots \\
          \cdots& 0 & \ct 0& \ctu &\ctuc \cdots & \ctu & \ct h^0_r & \ctu & \cdots \\
          \cdots & 0 &\ct0 & 0 & \cdots\ct & 0 & \ct 0 & 0 & \cdots \\
          & \vdots & \ct\vdots& \vdots & \ct& \vdots & \ct\vdots & \vdots & \\
              \end{array}
            $};
          \end{tikzpicture}
  \end{center}
 
  Thus, $\mathcal{F}$ corresponds to a vector bundle.
\end{proof}
In \cite[Example 7.3]{EFS}, Eisenbud, Fl\o{}ystad, and Schreyer
use this criterion to prove that
a particular object in $\stgrmod{E}$ gives rise to the famous Horrocks-Mumford bundle.

\subsection{An isomorphism of Grothendieck groups}\label{subsection:K0}
In this subsection we are going to work with Chern classes
and so we assume that $k$ is an algebraically closed field.
We give a necessary condition for an object $\underline{M} \in \stgrmod{E}$
to correspond (up to translation and twist) to a vector bundle in $D^b( \Coh \Pro V )$
that can be simply read off from the Hilbert series of $M$
and thus is computationally much faster than computing (an excerpt of) the cohomology table.
Our main tools are the Grothendieck groups of abelian and triangulated categories.

First, let $\AC$ be an abelian category.
An additive invariant for $\AC$ consists of an abelian group $H$
and a map $\alpha: \Obj( \AC ) \rightarrow H$ such that
$\alpha( B ) = \alpha( A ) + \alpha( C )$ for all short exact sequences
$0 \rightarrow A \rightarrow B \rightarrow C \rightarrow 0$.
The Grothendieck group $K_0(\AC)$ is defined as the universal additive invariant of $\AC$.
It can be constructed as the abelian group freely
generated by the the isomorphism classes\footnote{To avoid set theoretic issues, we suppose that the isomorphism classes form a set.} $[A]$
for $A \in \AC$ modulo the relations $[A] - [B] + [C] = 0$ for all
short exact sequences as above. Any additive invariant $\alpha$ 
uniquely factors over the universal map $A \mapsto [A]$.

\begin{example}\label{example:K_0coh}
In the case $\AC = \Coh\Pro{V}$, 
it follows from the Koszul complex and the existence of resolutions by line bundles that 
\begin{equation}\label{equation:k0coh}
K_0(\Coh\Pro{V}) \simeq \Z[x,x^{-1}]/ \langle \sum_{i=0}^{n+1}(-1)^ix^{-i}\binom{n}{i}\rangle_{\Z[x,x^{-1}]} \simeq \langle x^0, \dots, x^n \rangle_{\Z}, 
\end{equation}
where the first isomorphism identifies the line bundles $\mathcal{O}_{\Pro{V}}(i)$ with the indeterminates $x^i$ for $i \in \Z$.
In this case, well-known additive invariants for $\Coh\Pro{V}$ are the rank
and the Chern polynomial of a coherent sheaf $\mathcal{F}$.
From the universal property of the Grothendieck group, they can be computed as follows:
If $[\mathcal{F}] = \sum_{i \in \Z} a_i[\mathcal{O}_{\Pro{V}}(i)]$ for finitely many non-zero $a_i \in \Z$,
then 
\[\rank(\mathcal{F}) = \sum_{i\in \Z} a_i \in \Z\]
and
\[
 \chern( \mathcal{F} ) = \prod_{i \in \Z} (1+ih)^{a_i} \in (\Z[h]/\langle h^{n+1}\rangle)^{\times}
\]
where $(\Z[h]/\langle h^{n+1}\rangle)^{\times}$ denotes the group of units of the ring $\Z[h]/\langle h^{n+1}\rangle$.
\end{example}
%

Next, we turn to the Grothendieck group $K_0(\TC)$ of a triangulated category $\TC$.
It is defined as the abelian group freely generated by the isomorphism classes $[T]$ 
for $T \in \TC$ modulo the relations $[S] - [T] + [U] = 0$ for
every distinguished triangle $S \rightarrow T \rightarrow U \rightarrow S[1]$.
Since any short exact sequence in an abelian category $\AC$
induces a distinguished triangle in $D^b( \AC )$, we have an inclusion
$K_0( \AC ) \hookrightarrow K_0( D^b( \AC ) )$
that actually is an isomorphism with
\[
 K_0( D^b( \AC ) ) \rightarrow K_0( \AC ): [ C^{\bullet} ] \mapsto \sum_{i \in \Z} (-1)^i[H^i(C^{\bullet})]
\]
as an inverse. In particular, we have an isomorphism $K_0( \Coh \Pro{V}  ) \simeq K_0( D^b( \Coh \Pro{V}  ) )$
that allows us to speak about the rank and the Chern polynomial of an arbitrary object in $D^b( \Coh \Pro{V}  )$.
Moreover, given a triangulated functor $F: \TC \rightarrow \UC$ between triangulated categories,
it functorially induces a well-defined group homomorphism
\[
 K_0(F): K_0( \TC ) \rightarrow K_0( \UC ): [T] \mapsto [F(T)].
\]
In particular, the BGG equivalence $\BGG: \stgrmod{E} \xrightarrow{\sim} D^b( \Coh \Pro{V} $ induces an isomorphism
\[
  K_0( \stgrmod{E} ) \simeq K_0( D^b( \Coh \Pro{V}  ) )
\]
which we are now going to investigate.
First, the Grothendieck group of $K_0( \stgrmod{E} )$ is given as the quotient
$K_0( E\grmod )/ \langle [P] \mid \text{$P$ is projective} \rangle$.
Such a description is actually valid for every stabilization of an exact category
(see \cite[Corollary in Section 9.5]{Hap87}).
Now, since every object in $E\grmod$ admits a finite filtration with
composition factors in $\{k(i) \mid i \in \Z\}$,
the group $K_0( E\grmod )$ is generated by the classes $[k(i)]$.
Moreover, there are no relations between these generators because
$[M] = \sum_{i \in \Z} \dim_k( M_i )[ k(-i) ]$
encodes the Hilbert function of $M \in E\grmod$,
which is an additive invariant. Thus, we get an isomorphism
\begin{equation}\label{equation:k0stable}
 K_0( \stgrmod{E} ) \simeq \Z[t,t^{-1}]/ \langle \sum_{i=0}^{n+1}t^{i}\binom{n}{i}\rangle_{\Z[t,t^{-1}]} 
\end{equation}
that identifies $\underline{k(-i)}$ with the indeterminate $t^i$.
\begin{lemma}\label{lemma:K_0bgg}
 Under the identifications \eqref{equation:k0coh} and \eqref{equation:k0stable},
 the isomorphism of Grothendieck groups
 \[
   K_0( \stgrmod{E} ) \simeq K_0( D^b( \Coh \Pro{V}  ) )
 \]
 induced by the BGG correspondence is given by
\begin{align*}
\Z[t,t^{-1}]/ \langle \sum_{i=0}^{n+1}t^{i}\binom{n}{i}\rangle_{\Z[t,t^{-1}]} &\longrightarrow \Z[x,x^{-1}]/ \langle \sum_{i=0}^{n+1}(-1)^ix^{-i}\binom{n}{i}\rangle_{\Z[x,x^{-1}]}\\
t^i &\longmapsto (-1)^ix^{-i}
\end{align*}
\end{lemma}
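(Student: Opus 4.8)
The plan is to chase the generator $\underline{k(-i)}$ through the BGG correspondence and read off its image in $K_0(D^b(\Coh\Pro V))$ under the identification \eqref{equation:k0coh}. By the defining property of the BGG equivalence recalled in the first theorem, $\BGG$ sends $\underline{k(i)}$ to $\mathcal{O}_{\Pro V}(i)[-i]$; equivalently $\BGG(\underline{k(-i)}) = \mathcal{O}_{\Pro V}(-i)[i]$. Since \eqref{equation:k0stable} identifies $\underline{k(-i)}$ with $t^i$, and \eqref{equation:k0coh} identifies $\mathcal{O}_{\Pro V}(j)$ with $x^j$, it remains to compute the class $[\mathcal{O}_{\Pro V}(-i)[i]]$ in $K_0(D^b(\Coh\Pro V))$.

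First I would recall how translation acts on a Grothendieck group of a triangulated category: for any object $T$, the triangle $T \to 0 \to T[1] \to T[1]$ gives $[T[1]] = -[T]$, hence $[T[e]] = (-1)^e[T]$ for all $e \in \Z$. Applying this with $T = \mathcal{O}_{\Pro V}(-i)$ and $e = i$ yields $[\mathcal{O}_{\Pro V}(-i)[i]] = (-1)^i[\mathcal{O}_{\Pro V}(-i)]$, which under \eqref{equation:k0coh} is $(-1)^i x^{-i}$. Stringing the identifications together, the induced map sends $t^i \mapsto (-1)^i x^{-i}$, exactly as claimed. One should also note that this assignment is well defined and a ring-compatible bijection for free: it is induced by $K_0(\BGG)$ of an equivalence of triangulated categories, so it is automatically an isomorphism of abelian groups, and a direct check confirms that $\sum_{i=0}^{n+1} t^i \binom{n}{i}$ is sent to $\sum_{i=0}^{n+1} (-1)^i x^{-i}\binom{n}{i}$, i.e.\ the two presenting relations correspond, which is consistent with the isomorphism \eqref{equation:k0coh}.

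Since everything reduces to the action of $\BGG$ on the generators $\underline{k(i)}$ (already part of the statement of the BGG theorem) together with the elementary behaviour of $[-]$ under translation, there is essentially no obstacle here; the only point requiring a line of care is bookkeeping the sign and the direction of the twist, i.e.\ making sure that $\underline{k(-i)} \mapsto t^i$ in \eqref{equation:k0stable} is matched with $\BGG(\underline{k(-i)}) = \mathcal{O}_{\Pro V}(-i)[i]$ and not with an off-by-one or sign error. Thus the proof is a short computation: apply the generator-formula for $\BGG$, apply $[T[e]] = (-1)^e[T]$, and translate through \eqref{equation:k0coh} and \eqref{equation:k0stable}.
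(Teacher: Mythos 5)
Your proposal is correct and follows essentially the same route as the paper: send $\underline{k(-i)}$ through the BGG equivalence and use $[T[e]]=(-1)^e[T]$ in the Grothendieck group of a triangulated category. In fact your bookkeeping $\BGG(\underline{k(-i)})=\mathcal{O}_{\Pro V}(-i)[i]$ is the correct reading of the twist (the paper's one-line proof writes $\mathcal{O}_{\Pro V}(i)[-i]$ here, an apparent typo), and your extra check that the presenting relations correspond is a harmless bonus.
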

\begin{proof}
 Via the BGG correspondence the object $\underline{k(-i)}$ is sent to $\mathcal{O}_{\Pro V}(i)[-i]$. 
 Since $[T[i]] = (-1)^i[T] \in K_0( \TC )$ for
 all $T$ in a triangulated category $\TC$, the claim follows.
\end{proof}

\begin{corollary}\label{corollary:computing_rank_and_chern}
 Let $\underline{M} \in \stgrmod{E}$.
 Then 
 \[
  \rank( \BGG(\underline{M}) ) = \sum_{i \in \Z}(-1)^i\dim_k(M_i) \in \Z
 \]
 and
 \[
  \chern( \BGG(\underline{M}) ) = \prod_{i \in \Z}(1+ih)^{(-1)^i\dim_k(M_{-i})} \in (\Z[h]/\langle h^{n+1}\rangle)^{\times}.
 \]
\end{corollary}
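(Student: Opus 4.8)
The plan is to compute the class $[\BGG(\underline{M})]$ in the Grothendieck group explicitly and then extract the two additive invariants using the formulas of Example \ref{example:K_0coh}. First I would express $[\underline{M}]$ in terms of the generators $t^i$. Since $E$ is finite-dimensional, so is $M$, and every module in $E\grmod$ admits a composition series with factors among the $k(i)$; hence in $K_0(E\grmod)$ one has $[M]=\sum_{i\in\Z}\dim_k(M_i)[k(-i)]$ (the Hilbert function, as noted before Lemma \ref{lemma:K_0bgg}). Passing to the quotient $K_0(\stgrmod{E})=K_0(E\grmod)/\langle[P]\mid P\text{ projective}\rangle$ and using the identification \eqref{equation:k0stable} of $\underline{k(-i)}$ with $t^i$, this reads $[\underline{M}]=\sum_{i\in\Z}\dim_k(M_i)\,t^i$, a finite sum.

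Next I would apply the BGG isomorphism of Lemma \ref{lemma:K_0bgg}, namely $t^i\mapsto(-1)^i x^{-i}$, and then reindex by $j=-i$ (using $(-1)^{-j}=(-1)^j$) to obtain
\[
[\BGG(\underline{M})]=\sum_{i\in\Z}(-1)^i\dim_k(M_i)\,x^{-i}=\sum_{j\in\Z}(-1)^j\dim_k(M_{-j})\,x^{j}.
\]
Under the identification \eqref{equation:k0coh} of $x^j$ with $[\mathcal{O}_{\Pro V}(j)]$, this says $[\BGG(\underline{M})]=\sum_{j}a_j[\mathcal{O}_{\Pro V}(j)]$ with $a_j=(-1)^j\dim_k(M_{-j})$. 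At this point I would recall that, as explained in the excerpt, the rank and the Chern polynomial of an object of $D^b(\Coh\Pro{V})$ are by definition transported from $K_0(\Coh\Pro{V})$ through the isomorphism $K_0(\Coh\Pro{V})\simeq K_0(D^b(\Coh\Pro{V}))$, so the formulas of Example \ref{example:K_0coh} apply verbatim to this expansion.

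Finally, I would substitute $a_j=(-1)^j\dim_k(M_{-j})$ into $\rank(\mathcal{F})=\sum_j a_j$, giving $\sum_j(-1)^j\dim_k(M_{-j})=\sum_i(-1)^i\dim_k(M_i)$ after the substitution $i=-j$, and into $\chern(\mathcal{F})=\prod_j(1+jh)^{a_j}$, giving $\prod_i(1+ih)^{(-1)^i\dim_k(M_{-i})}$ after renaming the index. I do not expect any genuine obstacle here: the argument is pure bookkeeping through the two identifications \eqref{equation:k0coh}, \eqref{equation:k0stable} and Lemma \ref{lemma:K_0bgg}. The only points deserving a line of care are the finiteness of $M$ (so that all sums and products are finite and the stated formulas are literally well-defined) and the bookkeeping of the sign under reindexing $i\leftrightarrow-i$.
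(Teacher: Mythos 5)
Your proposal is correct and follows exactly the paper's route: the paper's proof is simply ``Immediate from Example \ref{example:K_0coh} and Lemma \ref{lemma:K_0bgg}'', and your argument is precisely the bookkeeping behind that statement (expand $[\underline{M}]$ via the Hilbert function, push through $t^i\mapsto(-1)^ix^{-i}$, and read off rank and Chern polynomial from the expansion in the classes $[\mathcal{O}_{\Pro V}(j)]$). No discrepancies to report.
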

\begin{proof}
 Immediate from Example \ref{example:K_0coh}
 and Lemma \ref{lemma:K_0bgg}.
\end{proof}

We can now state and prove our necessary condition
for being a vector bundle up to translation and twist that
can be read off from the Hilbert series.

\begin{corollary}[Necessary condition]\label{corollary:necessary_condition}
 Let $\underline{M} \in \stgrmod{E}$ such that $\BGG(\underline{M})$ is
 isomorphic to a vector bundle up to translation and twist.
 Define $r := \sum_{i \in \Z}(-1)^i\dim_k(M_i)$.
 We set $e := 1$ if $r < 0$, and $e := 0$ otherwise.
 Then the degree of
 \[
  \prod_{i \in \Z}(1+ih)^{(-1)^{i+e}\dim_k(M_{-i})} = 1 + c_1 h + c_2 h^2 + \dots + c_n h^n \in (\Z[h]/\langle h^{n+1}\rangle)^{\times}
 \]
 is lower or equal to $\abs{r}$, where $c_1, \dots, c_n \in \Z$.
\end{corollary}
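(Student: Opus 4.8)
The plan is to reduce the corollary to Corollary~\ref{corollary:computing_rank_and_chern} combined with the elementary fact that the total Chern class of a genuine locally free sheaf of rank $\rho$ is a polynomial of degree at most $\rho$. The first step is to recognise the product in the statement as a signed power of a Chern polynomial: since the unit group of $\Z[h]/\langle h^{n+1}\rangle$ is abelian and $(-1)^{i+e}=(-1)^i(-1)^e$, Corollary~\ref{corollary:computing_rank_and_chern} gives
\[
\prod_{i\in\Z}(1+ih)^{(-1)^{i+e}\dim_k(M_{-i})}=\Big(\prod_{i\in\Z}(1+ih)^{(-1)^{i}\dim_k(M_{-i})}\Big)^{(-1)^e}=\chern\big(\BGG(\underline{M})\big)^{(-1)^e},
\]
while the same corollary identifies $r$ with $\rank(\BGG(\underline{M}))$.

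Next I would unwind the hypothesis. By its meaning there are $d,e'\in\Z$ with $\BGG(\underline{M})(d)[e']$ isomorphic to a vector bundle; then $\mathcal{G}:=\BGG(\underline{M})[e']$ is also a vector bundle, being the $(-d)$-twist of that one. Passing to $K_0(D^b(\Coh \Pro V))$, where the translation $[e']$ acts by multiplication with $(-1)^{e'}$, I get $[\mathcal{G}]=(-1)^{e'}[\BGG(\underline{M})]$, hence $\rank(\mathcal{G})=(-1)^{e'}r$, and — since $\chern$ is a group homomorphism from $K_0$ to the unit group of $\Z[h]/\langle h^{n+1}\rangle$ — $\chern(\BGG(\underline{M}))=\chern(\mathcal{G})^{(-1)^{e'}}$. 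If $r=0$ then $\mathcal{G}$ has rank $0$, so $\mathcal{G}=0$, the product equals $1$, and its degree is $0=\abs{r}$; thus I may assume $r\neq 0$. Then $\mathcal{G}$ is a nonzero vector bundle with $\rank(\mathcal{G})=(-1)^{e'}r\ge 0$, which forces $(-1)^{e'}=\sgn(r)$ and $\rank(\mathcal{G})=\abs{r}$; the definition of $e$ gives $(-1)^e=\sgn(r)$ too, so $e$ and $e'$ have the same parity. Combining the two relations,
\[
\prod_{i\in\Z}(1+ih)^{(-1)^{i+e}\dim_k(M_{-i})}=\chern(\BGG(\underline{M}))^{(-1)^e}=\chern(\mathcal{G})^{(-1)^{e'}(-1)^e}=\chern(\mathcal{G})^{(-1)^{e+e'}}=\chern(\mathcal{G}).
\]

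The degree bound is then immediate: $\mathcal{G}$ is an honest vector bundle of rank $\abs{r}$, so $\chern(\mathcal{G})=1+c_1(\mathcal{G})h+\dots+c_{\abs{r}}(\mathcal{G})h^{\abs{r}}$ in $\Z[h]$ — the Chern classes of a locally free sheaf vanish above its rank — and hence its image in $\Z[h]/\langle h^{n+1}\rangle$ has degree at most $\min(\abs{r},n)\le\abs{r}$, which is exactly the assertion.

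Everything here is routine once these ingredients are in place; the one point requiring genuine care is the sign bookkeeping — matching the normalisation exponent $e$, which is read off from the Hilbert series through $r$, with the parity of the translation $e'$ that realises $\BGG(\underline{M})$ as a translate (and twist) of an actual bundle. Conceptually the corollary says nothing more than that being a \emph{vector bundle}, rather than an arbitrary object of $D^b(\Coh \Pro V)$, is what forces Chern classes to vanish above the rank, and that this is transported faithfully through $K_0$ and the BGG correspondence once the $(-1)^e$ normalisation is applied.
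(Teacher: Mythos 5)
Your proof is correct and follows essentially the same route as the paper: identify $r$ and the signed product with the rank and Chern polynomial via Corollary~\ref{corollary:computing_rank_and_chern}, match the parity of the translation with $e$ through $K_0$, and conclude from the fact that Chern classes of a vector bundle vanish above its rank. You merely spell out the sign bookkeeping and the degenerate case $r=0$ more explicitly than the paper does.
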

\begin{proof}
 Let $\mathcal{E}$ be a vector bundle and $j \in \Z$ 
 such that $\BGG(\underline{M}) \cong \mathcal{E}[j]$.
 Then $\rank( \mathcal{E} ) = |r|$ 
 and $\chern( \mathcal{E} ) = \chern( \mathcal{E}[j+e] ) = \prod_{i \in \Z}(1+ih)^{(-1)^{i+e}\dim_k(M_{-i})}$
 by Corollary \ref{corollary:computing_rank_and_chern}.
 Now, the claim follows from the fact that the degree of the Chern polynomial of a vector bundle is lower 
 or equal to the rank.
\end{proof}

\subsection{Equivariant BGG correspondence}\label{subsection:equivariant}
Let $G$ be a finite group and $k$ be a splitting field for $G$.
Assume $G$ acts linearly on $V$. Then this action extends to an action on $E$
by $\Z$-graded algebra automorphisms.
Now, the BGG correspondence can be lifted to a $G$-equivariant setup
as follows:
let $(E\rtimes G)\grmod$ denote the category of finitely generated $G$-equivariant $\Z$-graded $E$-modules,
i.e., 
the objects are $\Z$-graded $k$-vector spaces $M$ equipped with a graded $E$-module structure and a $G$-action 
compatible with the grading
such that the multiplication map $E \otimes M \rightarrow M$ is $G$-equivariant,
and the morphisms are graded linear maps compatible with both the $E$-action and $G$-action.
Then this category is a Frobenius category by the following lemma.

\begin{lemma}
 \mbox{}
   \begin{enumerate}
    \item Every projective object in $(E\rtimes G)\grmod$ is isomorphic to
          an object of the form 
          $E \otimes U$ for some $\Z$-graded finite dimensional $G$-module $U$ and
          conversely, every object of this form is projective.
    \item The classes of projective and injective objects in $(E\rtimes G)\grmod$ coincide.
    \item Every object in $(E\rtimes G)\grmod$ has a projective and an injective resolution.
   \end{enumerate}
\end{lemma}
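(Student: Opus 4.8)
The plan is to view $(E\rtimes G)\grmod$ as the category of finitely generated $\Z$-graded modules over the skew group algebra $E\rtimes G$, with $G$ placed in degree $0$; since $E=\bigwedge V$ is finite-dimensional, so is $E\rtimes G$. All three statements will then follow from standard facts about finite-dimensional graded algebras together with two structural inputs: $kG$ is semisimple (here we use $\cha k\nmid\abs{G}$), and $E$ is a graded Frobenius algebra — this is essentially the content of the isomorphism $\omega_E\simeq E\otimes_k\bigwedge^{n+1}W$ of Remark \ref{remark:in_free_terms}.

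For (1), the ``conversely'' direction is the easy one: for a finite-dimensional graded $G$-module $U$ there is a natural isomorphism $E\otimes_k U\cong(E\rtimes G)\otimes_{kG}U$ (with $G$ acting diagonally and $E$ acting on the left tensor factor), where $kG\hookrightarrow E\rtimes G$ is the inclusion of the degree-$0$ part; the induction--restriction adjunction then gives $\Hom_{E\rtimes G}(E\otimes_k U,-)\cong\Hom_{kG}(U,\mathrm{res}_{kG}(-))$, which is exact because $\mathrm{res}_{kG}$ is exact and $U$ is projective over the semisimple ring $kG$, so $E\otimes_k U$ is projective. For the forward direction I would identify $\rad(E\rtimes G)$ with the $G$-stable nilpotent ideal $\mathfrak{m}\rtimes G$, where $\mathfrak{m}=\bigoplus_{i\geq1}\bigwedge^iV$ is the augmentation ideal of $E$; then $(E\rtimes G)/\rad(E\rtimes G)\cong kG$ is semisimple, the graded-simple $(E\rtimes G)$-modules are exactly the irreducibles $U_\lambda$, $\lambda\in\Irr(G)$, placed in a single degree, and $E\otimes_k U_\lambda$ has simple top $U_\lambda$ (since $\rad(E\rtimes G)\cdot(E\otimes_k U_\lambda)=\mathfrak{m}\otimes_k U_\lambda$), hence is an indecomposable projective, namely the projective cover of $U_\lambda$. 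Graded Krull--Schmidt for the finite-dimensional algebra $E\rtimes G$ then expresses any finitely generated projective as a finite direct sum of shifts $(E\otimes_k U_{\lambda_j})(d_j)$, which recombine into $E\otimes_k U$ with $U=\bigoplus_j(U_{\lambda_j})(d_j)$.

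For (2), the key point is that $E\rtimes G$ is itself a graded Frobenius algebra: given a Frobenius form $\varepsilon\colon E\to k$ for $E$ (the projection onto $\bigwedge^{n+1}V\cong k$), the functional $\lambda\colon E\rtimes G\to k$, $\sum_{g\in G}e_g\,g\mapsto\varepsilon(e_1)$, induces a non-degenerate pairing $(x,y)\mapsto\lambda(xy)$: if $x=\sum_g e_g\,g$ with $e_h\neq0$, one picks $f\in E$ with $\varepsilon\big(e_h\,h(f)\big)\neq0$ and computes $\lambda\big(x\cdot(fh^{-1})\big)=\varepsilon\big(e_h\,h(f)\big)\neq0$. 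Hence $\Hom_k(E\rtimes G,k)$ is isomorphic, up to a degree shift, to $E\rtimes G$ as a graded module on either side, so $E\rtimes G$ is self-injective; therefore an object of $(E\rtimes G)\grmod$ is injective precisely when it is projective, which is (2) (and, together with (1), pins down the injectives as well).

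For (3), projective resolutions by finitely generated projectives exist because $E\rtimes G$ is graded Noetherian: iterate projective covers, with the kernels remaining finitely generated. For injective resolutions I would use the exact $k$-linear duality $D=\Hom_k(-,k)$ interchanging finitely generated graded left and right $(E\rtimes G)$-modules: take a projective resolution $\cdots\to Q_1\to Q_0\to DM\to0$ of the right module $DM$ by finitely generated projectives and apply $D$ to obtain an exact complex $0\to M\to DQ_0\to DQ_1\to\cdots$ whose terms are injective, since each $Q_i$ is a summand of a finite direct sum of shifts of $E\rtimes G$ and $D(E\rtimes G)$ is injective (in fact $\cong E\rtimes G$ up to shift, by (2)). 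The only real work I anticipate is the bookkeeping in (1): computing $\rad(E\rtimes G)$ and its graded simple quotients and tracking the projective covers carefully enough to reach the precise form $E\otimes_k U$, rather than merely a direct summand of one; everything else is a transcription of standard skew-group-algebra and Frobenius-algebra theory into the $\Z$-graded setting.
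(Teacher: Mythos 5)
Your proof is correct, but it runs along a genuinely different track from the paper's. The paper stays inside the equivariant module category itself: projectivity of $E \otimes U$ is obtained by writing $\Hom_{(E\rtimes G)\grmod}(E\otimes U,-)$ as the exact functor $\Hom_{E\grmod}(E\otimes U,-)$ followed by the (exact, by Maschke) invariants functor; the classification of projectives comes from an explicit construction of projective covers, namely a $G$-equivariant section $\sigma$ of $M \to M/\rad(M)$ (this is where the splitting-field/semisimplicity hypothesis enters) and the observation that $E \otimes \im(\sigma) \to M$ is already a projective cover in $E\grmod$, hence in the equivariant category, so uniqueness of projective covers finishes (1); finally (2) and (3) follow by applying $\Hom_k(-,k)$ and noting that this duality preserves the class of modules of the form $E \otimes U$. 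You instead pass to the skew group algebra $E \rtimes G$ as a finite-dimensional graded algebra: induction--restriction from $kG$ for the ``conversely'' part of (1), the computation $\rad(E\rtimes G)=\mathfrak{m}\rtimes G$ together with simple tops and graded Krull--Schmidt for the forward part, and an explicit graded Frobenius form $\lambda(\textstyle\sum_g e_g\,g)=\varepsilon(e_1)$ (whose one-sided nondegeneracy suffices in finite dimension) to get self-injectivity for (2). What your route buys: (2) is established without any semisimplicity hypothesis, since the Frobenius form argument works for arbitrary $G$ and $k$, and the identification of indecomposable projectives as covers of the graded simples is a reusable structural statement; what the paper's route buys: it never leaves the equivariant category (matching the internal/tensor-categorical viewpoint of the appendix), it produces projective covers and injective hulls explicitly (which is exactly what the minimal Tate sequences of Theorem \ref{theorem:BGG_and_cohomology} need), and the coincidence of projectives and injectives drops out of the single observation that $k$-duality preserves the class $E\otimes U$ (essentially $\omega_E\simeq E\otimes_k\bigwedge^{n+1}W$ as in Remark \ref{remark:in_free_terms}), with no need for Krull--Schmidt or a Frobenius form on $E\rtimes G$. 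The only place where you should spell out slightly more is the standard step that every finitely generated graded injective embeds in, hence is a summand of, a finite direct sum of shifts of $\Hom_k(E\rtimes G,k)$, and the graded Krull--Schmidt property you invoke; both are routine for finite-dimensional graded algebras, so there is no real gap.
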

\begin{proof}
 As a graded $E$-module, $E \otimes U$ is projective.
 Since $\Hom_{(E\rtimes G)\grmod}( E \otimes U, - )$
 can be computed as the composition of the exact functor $\Hom_{E\grmod}( E \otimes U, - )$
 and the exact functor of invariants, 
 $E \otimes U$ is also projective as an object in $(E\rtimes G)\grmod$.
 
 Given any $M \in (E\rtimes G)\grmod$, the epimorphism $\pi: M \rightarrow M/\rad(M)$,
 where $\rad(M)$ denotes the graded radical of $M$ as an $E$-module,
 is a $G$-equivariant morphism. Since $k$ is a splitting field of $G$,
 $\pi$ admits a section on the level of graded $G$-modules, i.e., a graded $G$-module morphism $\sigma: M/\rad(M) \rightarrow M$
 such that $\pi \circ \sigma = \id$.
 Then, the morphism $E \otimes \im( \sigma ) \rightarrow M: e \otimes m \mapsto em$
 is a projective cover of $M$ in $(E\rtimes G)\grmod$, since it is also a projective cover in $E\grmod$.
 If $M$ is projective, then it is its own projective cover and therefore
 isomorphic to an object of the form $E \otimes U$
 by the uniqueness of projective covers.
 Using the duality $\Hom_k(-,k)$, we get all the corresponding statements for injective modules.
 Since $\Hom_k(-,k)$ respects the class of objects of the form $E \otimes U$,
 projectives and injectives coincide.
\end{proof}

By modding out the class of projective-injective objects,
we can form the stable category $\stgrmod{E\rtimes G}$
that equivalently may be described as $\CTate( (E\rtimes G)\grmod )$ (see Theorem \ref{theorem:stable_cat}).
It is shown in \cite[Theorem 9.1.2]{FloystadBGG} that
\[D^b( \Coh( \Pro V \rtimes G ) ) \simeq \CTate( (E\rtimes G)\grmod )\]
where $D^b( \Coh( \Pro V \rtimes G ) )$ is the bounded derived category
of $G$-equivariant coherent sheaves on $\Pro V$.
Combining this result with $\CTate( (E\rtimes G)\grmod ) \simeq \stgrmod{E\rtimes G}$,
we get our desired $G$-equivariant BGG correspondence
\[
 D^b( \Coh( \Pro V \rtimes G ) ) \simeq \stgrmod{E\rtimes G}.
\]

\begin{remark}
It is hard to find an explicit form of the equivariant BGG correspondence in the literature. The reason for this
might be that it seems very likely that one can state and prove the BGG correspondence
internal to an appropriate class of tensor categories without using any new argument,
but simply by using the language of tensor categories.
\end{remark}

\subsection{Adaption of the construction strategy to the equivariant setup}\label{subsection:adaption}
Let $T$, $U$ be finite dimensional $\Z$-graded \emph{irreducible} $G$-modules concentrated in degrees $t$, $u$, respectively.
If $t \not= u$, then any monomorphism $\phi: (\bigwedge^{n+1}W \otimes_k T) \longrightarrow E \otimes_k (\bigwedge^{n+1}W \otimes_k U)$
of graded $G$-modules defines
a morphism of $G$-equivariant graded $E$-modules
\[
 \widehat{\phi}: E \otimes_k (\bigwedge^{n+1}W \otimes_k T) \longrightarrow E \otimes_k (\bigwedge^{n+1}W \otimes_k U)
\]
that fits into a minimal Tate sequence. In fact,
we have already seen in Subsection \ref{subsection:construction_strategy} that
the natural monomorphism $\kernel( \widehat{\phi} ) \hookrightarrow E \otimes_k (\bigwedge^{n+1}W \otimes_k T)$
is the injective hull of a reduced $E$-module.
To see that $\widehat{\phi}$ fits into a minimal Tate sequence
it suffices to show that $\cokernel(\widehat{\phi})$ is a reduced $E$-module as well.
But this follows from the fact that
$\soc( E \otimes_k (\bigwedge^{n+1}W \otimes_k U) ) \subseteq \image(\widehat{\phi})$
which holds due to the irreducibility of $U$.
So, in addition to equality \eqref{equation:h0}, we also get
\begin{equation}\label{equation:hm1}
H^i\left(\BGG( \kernel( \widehat{\phi} ) )(-i+2) \right)
=
\left\{ 
    \begin{array}{cc}
                 U, & \text{for~}(-i+2) = u,\\
                 0, & \text{else,}
    \end{array} 
   \right. 
\end{equation}
for all $i \in \Z$.

We summarize the context of our construction strategy
in the following definition.

\begin{definition}
 Let $G$ be a finite group, $k$ a splitting field for $G$,
 and let
 $V, T, U$ be finite dimensional $\Z$-graded irreducible $G$-modules 
 concentrated in degrees $-1$, $t$, $u$, respectively, such that $t \not= u$.
 We define the associated \textbf{BGG space} as the subset
 of the projective space
 \[\Pro\big( \Hom_{kG\grmod}( (\bigwedge^{n+1}W \otimes_k T), E \otimes_k (\bigwedge^{n+1}W \otimes_k U) ) \big)\]
 consisting of those points $[\phi]$ such that $\phi$ is a monomorphism,
 where $E := \bigwedge V$ and $W := \Hom_k( V, k )$.
 We denote this subset by $\BGG_V( T, U )$.
\end{definition}

Points in $\BGG_V( T, U )$ give rise to
objects in $D^b( \Coh( \Pro V \rtimes G ) )$ with hypercohomology groups
given by the equations \eqref{equation:h0} and \eqref{equation:hm1}.

\begin{definition}
 Let $\BGG_V(T,U)$ be a BGG space consisting of exactly one point $[\phi]$.
 Then we call the $G$-equivariant object $\BGG( \kernel( \widehat{\phi} ) ) \in D^b( \Coh( \Pro V \rtimes G ) )$
 \textbf{strongly determined}.
\end{definition}

\begin{remark}\label{remark:character_theroy_BGG_space}
 It is easy to see that $\BGG_V(T,U)$ is a singleton
 if and only if
 \[\Hom_{kG\grmod}\big( (\bigwedge^{n+1}W \otimes_k T), E \otimes_k (\bigwedge^{n+1}W \otimes_k U) \big) \]
 is of dimension $1$,
 and this dimension can be computed as the scalar product 
 of the dual character associated to
 $(\bigwedge^{n+1}W \otimes_k T)$ and the character associated to the $t+n+1$ degree part of
 $E \otimes_k (\bigwedge^{n+1}W \otimes_k U)$.
\end{remark}

\section{Strongly determined $A_5$-equivariant vector bundles}\label{section:A5}
In this section we work with $k = \C$.
Let $A_5 = \langle (1~2~3~4~5), (3~4~5) \rangle \leq S_5$
denote the alternating group on $5$ points.
The goal of this section is to find
all strongly determined $A_5$-equivariant vector bundles
on $\Pro V$, where $V$ is the unique irreducible $A_5$-representation of degree $5$
(and thus $\Pro V \simeq \Pro^4$).
We start by recalling the character table of $A_5$:
\[
 \begin{array}{r|r|r|r|r|r|}   
      & () & (1~2)(3~ 4) & (1~ 2~ 3) & (1~ 2~ 3~ 4~ 5) & (1~ 2~ 3~ 5~ 4) \\
      \hline
      & & & & &\\[-1em]
      \chi^1  &   1  &1  &1  &1  &1 \\
      \chi^3  &   3 &-1  &\cdot  &\sigma\Phi &\Phi \\
      \sigma\chi^3  &   3 &-1  &\cdot &\Phi  &\sigma\Phi \\
      \chi^4  &   4  &\cdot  &1 &-1 &-1 \\
      \chi^5  &   5  &1 &-1  &\cdot  &\cdot \\
 \end{array}
\]
We have $\Phi = \frac{1+\sqrt{5}}{2}$ with its Galois conjugate $\sigma\Phi = \frac{1-\sqrt{5}}{2}$,
where $\sigma$ is the non-trivial element in the Galois group of $\Q(\Phi)$ over $\Q$.
By abuse of notation we will use the same symbol for a character $\chi$ and 
a representation affording $\chi$. If we want to consider 
a representation $\chi$ as a $\Z$-graded object concentrated in (internal) degree $d \in \Z$,
we decorate its symbol with an index: $\chi_d$.
Furthermore, we will highlight the individual steps of our
construction procedure by putting a box around them.

\begin{center}
  \begin{minipage}{\textwidth}
  \fbox{\textbf{Step 1:} Set $V := \chi^5_{-1}$.} 
  \end{minipage}
\end{center}

Note that the $k$-dual $W := \Hom( V, k ) =\chi^5_1$.
Furthermore, $\bigwedge^5 W = \chi^1_5$.
In particular, tensoring a graded representation with $\bigwedge^5 W$ simply shifts its degree by $5$.
We denote the set of irreducible characters of $A_5$ by $X := \{ \chi^1, \chi^3, \sigma\chi^3, \chi^4, \chi^5 \}$.

\begin{center}
  \begin{minipage}{\textwidth}
  \fbox{\textbf{Step 2:} Find all $i \in \{-1,-2,-3,-4\}$ and $\varphi,\psi \in X$ such that $\BGG_V( \varphi_i, \psi_0 )$ is a singleton.}
  \end{minipage}
\end{center}

This step can be performed by using character theory (see Remark \ref{remark:character_theroy_BGG_space}).
The cases $i>0$ and $i<-5$ lead to empty BGG spaces.
In the case $i = -5$, every point $\phi \in [\BGG_V( \varphi_i, \psi_0 )]$
gives us (see Subsection \ref{subsection:adaption} for the definition of $\widehat{\phi}$)
\[
 \kernel( \widehat{\phi} ) \cong \big( E \otimes_k (\bigwedge^{5}W \otimes_k \psi_{-5}) \big)_{< 0 }
\]
which is the first syzygy object in a resolution of $\bigwedge^{5}W \otimes_k \psi_{-5} \cong \psi_0$
(regarded as a trivial $E$-module).
Thus, the case $i=-5$ will always give us the $A_5$-equivariant 
vector bundle $\mathcal{O}_{\Pro V} \otimes \psi_0(-1)$.

\begin{center}
  \begin{minipage}{\textwidth}
  \fbox{
    \begin{minipage}{\textwidth}
      \textbf{Step 3:} For $(i,\varphi,\psi)$ found in Step $2$, let $[\phi] \in \BGG_V( \varphi_i, \psi_0 )$.
      Compute the morphism $\widehat{\phi}: E \otimes_k (\bigwedge^{5}W \otimes_k \phi_i) \longrightarrow E \otimes_k (\bigwedge^{5}W \otimes_k \psi_0)$
      (see Subsection \ref{subsection:adaption}).
      Determine the Hilbert series of $\kernel( \widehat{\phi} )$
      and check the necessary condition described in Corollary \ref{corollary:necessary_condition}.
    \end{minipage}
  }
  \end{minipage}
\end{center}

If the necessary condition does not hold, then repeat Step $3$ with the next triple $(i,\varphi,\psi)$
found in Step $2$ (if there is any such triple left).

\begin{remark}
 It is tempting to think that the computation of the Hilbert series of 
 $\kernel( \widehat{\phi} )$ might work without the computation of the morphism $\widehat{\phi}$ itself,
 but simply by looking at the characters of $E \otimes_k (\bigwedge^{5}W \otimes_k \phi_i)$ and $E \otimes_k (\bigwedge^{5}W \otimes_k \psi_0)$.
 For example, let $\phi_i = \chi_{-2}^4$ and $\psi_0 = \chi_{0}^1$.
 We enlist the (internal) degree-wise decompositions into characters in the following table: 
  \[
  \begin{array}{r|r|r|r}
   \deg & \kernel( \widehat{\phi} )  & E \otimes_k (\bigwedge^{5}W \otimes_k \phi_i) & E \otimes_k (\bigwedge^{5}W \otimes_k \psi_0) \\
  \hline
  &&& \\[-1em] 
 -2 & \chi^{4}  & \chi^{4}  &   \\ 
&&& \\[-1em] 
-1 & \chi^{3} + \sigma\chi^{3} + \chi^{4} + 2\chi^{5}  & \chi^{3} + \sigma\chi^{3} + \chi^{4} + 2\chi^{5}  &   \\ 
&&& \\[-1em] 
0 & 2\chi^{3} + 2\sigma\chi^{3} + 3\chi^{4} + 3\chi^{5}  & \chi^{1} + 2\chi^{3} + 2\sigma\chi^{3} + 3\chi^{4} + 3\chi^{5}  & \chi^{1}   \\ 
&&& \\[-1em] 
1 & \chi^{1} + 2\chi^{3} + 2\sigma\chi^{3} + 3\chi^{4} + 2\chi^{5}  & \chi^{1} + 2\chi^{3} + 2\sigma\chi^{3} + 3\chi^{4} + 3\chi^{5}  & \chi^{5}   \\ 
&&& \\[-1em] 
2 & 2\chi^{5}  & \chi^{3} + \sigma\chi^{3} + \chi^{4} + 2\chi^{5}  & \chi^{3} + \sigma\chi^{3} + \chi^{4}   \\ 
&&& \\[-1em] 
3 &  & \chi^{4}  & \chi^{3} + \sigma\chi^{3} + \chi^{4}   \\ 
&&& \\[-1em] 
4 &  &  & \chi^{5}   \\ 
&&& \\[-1em] 
5 &  &  & \chi^{1}   \\ 
  \end{array}
 \]
 We can see that the second column is given 
 by subtracting the forth column from the third column
 and keeping only those terms with positive coefficient.
 Interestingly, in the case that we are interested in (our group equals $A_5$ and $V = \chi^5_{-1}$),
 the decomposition of $\kernel( \widehat{\phi} )$ into characters can always be computed by such a subtraction
 (a fact that we could only check a posteriori, i.e., after having already computed $\kernel( \widehat{\phi} )$).
 However, this is only a coincidence and not true in general. 
 Counterexamples exist in the case where our group
 equals the symmetric group on $5$ points, e.g.,
 if $\chi$ denotes the irreducible character of degree $5$ 
 that maps the conjugacy class consisting of order $6$ elements to $1$,
 and if $\sgn$ denotes the signum, then $V = \phi_i = \chi_{-1}$ and $\psi_0 = (\sgn \chi)_0$ provides a counterexample.
\end{remark}

We give an example where the necessary condition checked in Step 3 does not hold.

\begin{example}
 Let $[\phi] \in \BGG_V( \chi_{-2}^4, \chi_{0}^1 )$ be the unique point
 and $\mathcal{F} := \BGG( \kernel( \widehat{\phi} ) )$.
 Using the methods described in Subsection \ref{subsection:K0}, we can compute 
 \begin{align*}
  \rank( \mathcal{F} ) &= 2, \\
  \chern( \mathcal{F} ) &= 1-h+3h^2+5h^3+10h^4
 \end{align*}
 up to translation and twist.
 The necessary condition described in Corollary \ref{corollary:necessary_condition}
 does not hold and thus, $\mathcal{F}$ cannot correspond to a vector bundle up to translation and twist.
 A finite excerpt of its cohomology table looks as follows:
 \settowidth{\mycolwd}{$140$}
\[
 \begin{array}{c*{11}{C{\mycolwd}}}   
 140\ct  &66  &25\ct   &6   &\ct   &   &\ct   &   &\ct   &   &\ct\\
  30\ct  &25  &20\ct  &15  &10\ct   &4   &\ct   &   &\ct   &   &\ct\\
   \ct   &   &\ct   &   &\ct   &   &1\ct   &   &\ct   &   &\ct\\
   \ct   &   &\ct   &   &\ct   &   &\ct   &   &\ct   &   &\ct\\
   \ct   &   &\ct   &   &\ct   &   &\ct   &6  &25\ct  &66  &140\ct
 \end{array}
\]
\end{example}

If the necessary condition does hold, then we can proceed further.

\begin{center}
  \begin{minipage}{\textwidth}
  \fbox{
    \begin{minipage}{\textwidth}
      \textbf{Step 4:} Find a finite excerpt of the cohomology table of $\BGG( \kernel( \widehat{\phi} ) )$
      by computing a few steps of a minimal injective and projective resolution of $\kernel( \widehat{\phi} )$.
      Check if the criterion described in Lemma \ref{lemma:sufficient_condition} holds up to translation and twist.
    \end{minipage}
  }
  \end{minipage}
\end{center}

If $\kernel( \widehat{\phi} )$ passed Step $3$ but did not correspond to a vector bundle up to translation and twist,
then we would get stuck in Step $4$ since we cannot disprove the existence of
the two sparse columns required by Lemma \ref{lemma:sufficient_condition}
by only computing finite excerpts of the cohomology table.
On the other hand, if $\kernel( \widehat{\phi} )$ does correspond to a vector bundle,
it is furthermore unclear how many steps are actually needed in the resolutions
for finding two sparse columns.
Nevertheless, if Step $4$ succeeds, then we have found
a strongly determined $A_5$-equivariant vector bundle on $\Pro V$.
In order to find all such bundles, we have to go back to Step $3$ and proceed with the next triple $(i,\phi, \psi)$
(if there is any such triple left).

Luckily, in the case that we are examining (our group equals $A_5$ and $V = \chi^5_{-1}$),
every $\kernel( \widehat{\phi} )$ that passes Step $3$ turns out to be a vector bundle.
\begin{example}
 We discuss a case where the vector bundle criterion does hold.
 Let $[\phi] \in \BGG_V( \chi_{-2}^3, \chi_{0}^1 )$ be the unique point
 and $\mathcal{F} := \BGG( \kernel( \widehat{\phi} ) )$.
 We compute
 \begin{align*}
  \rank( \mathcal{F} ) &= 3, \\
  \chern( \mathcal{F} ) &= 1+2h^2+2h^3
 \end{align*}
 up to translation and twist.
 The necessary condition described in Corollary \ref{corollary:necessary_condition}
 holds, and so we compute a finite excerpt of the cohomology table:
 \settowidth{\mycolwd}{$175$}
\[
 \begin{array}{c*{11}{C{\mycolwd}}}   
 162\ct&  70&  21\ct&  &  \ct&  &  \ct&  &  \ct&   \\
  \ct&  &  \ct&   5&   3\ct&  &  \ct&  &  \ct&   \\
  \ct&  &  \ct&  &  \ct&   1&  \ct&  &  \ct&   \\
  \ct&  &  \ct&  &  \ct&  &  \ct&  &  \ct&   \\
  \ct&  &  \ct&  &  \ct&  &   7\ct&  30&  81\ct& 175\\
 \end{array}
\]
 The criterion of Lemma \ref{lemma:sufficient_condition} also holds,
 thus, $\mathcal{F}$ is (up to translation and twist) a strongly determined $A_5$-equivariant
 vector bundle.
\end{example}

For stating the main theorem we need
the following definitions from Boij-S\"oderberg theory,
in which vector bundles with supernatural cohomology play a crucial role
(for a survey of Boij-S\"oderberg theory, see \cite{FloystadBSSurvey}).

\begin{definition}
 For $s \in \N_0$, a \bfindex{root sequence of length $s$} is a sequence of strictly decreasing integers
 \[
  \mathbf{z}: z_1 > z_2 > \dots > z_s.
 \]
 For our convenience, we will expand every such sequence by $z_i := \infty$ for $i < 1$ and $z_{i} := -\infty$ for $i > s$.
 We set its associated \textbf{Hilbert polynomial} as
 \[
  \HPo^{\mathbf{z}}(t) := \frac{1}{s!}\prod_{i=1}^s(t-z_i)
 \]
 and its associated \textbf{cohomology table} as
 \[
  \gamma^{\mathbf{z}}(p,q) := \left\{ \begin{array}{cc}
                 |\HPo^{\mathbf{z}}(p)|, & \text{for~} z_q > p > z_{q+1},  \\
                 0, & \text{else.}
    \end{array} \right.
 \]
\end{definition}

\begin{definition}
 Let $\mathbf{z}$ be a root sequence.
 A vector bundle $\mathcal{F} \in \Coh \Pro^n $ has \bfindex{supernatural cohomology of type $\mathbf{z}$}
 if the cohomology table of $\mathcal{F}$ is given by 
 \[
 \CH^{q}( \mathcal{F}(p) ) = \rank( \mathcal{F} ) \cdot \gamma^{\mathbf{z}}(p,q).
 \]
\end{definition}

Now, we summarize the findings of our computations that were performed using the software
packages described in the Appendix \ref{appendixa} in the following theorem.

\begin{theorem}\label{theorem:A5_main_theorem}
 Let $V$ be the unique irreducible $A_5$-representation of dimension $5$.
 Then all strongly determined $A_5$-equivariant vector bundles
 on $\Pro V$ are supernatural.
 All $i \in \{-1,-2,-3,-4\}$, and $\phi, \psi \in \{ \chi^1, \chi^3, \sigma\chi^3, \chi^4, \chi^5 \}$
 such that $\BGG_V( \phi_i, \psi_0 )$ is a singleton
 whose element corresponds to a vector bundle $\mathcal{F}$ (up to translation and twist)
 can be found in the following table, 
 together with the type $\mathbf{z}$ of $\mathcal{F}$:
 \[
  \begin{array}{c|c|c|c|c}
  \psi & i & \phi & \mathbf{z} & \rank\\
  \hline
  & & & & \\[-1em]
  \chi^1 & -4 & \chi^5 & [ 0, -2, -3, -4 ] & 4\\ & & & & \\[-1em] 
 & -2 & \chi^{3} & [ 0, -1, -3, -6 ] & 3\\ & & & & \\[-1em] 
 & -2 & \sigma\chi^{3} & [ 0, -1, -3, -6 ] & 3\\ & & & & \\[-1em] 
 & -1 & \chi^5 & [ 0, -1, -2, -3 ] & 1\\ & & & & \\[-1em] 
\chi^{3} & -3 & \chi^{3} & [ 0, -2, -3, -5 ] & 9\\ & & & & \\[-1em] 
 & -3 & \sigma\chi^{3} & [ 0, -2, -3, -5 ] & 9\\ & & & & \\[-1em] 
 & -2 & \chi^1 & [ 0, -3, -5, -6 ] & 3\\ & & & & \\[-1em] 
& -1 & \chi^{3} & [ 0, -1, -4, -5 ] & 6\\ & & & & \\[-1em] 
& -1 & \sigma\chi^{3} & [ 0, -1, -4, -5 ] & 6\\ & & & & \\[-1em] 
& -1 & \chi^5 & [ 0, -1, -3, -6 ] & 3\\ & & & & \\[-1em] 
\sigma\chi^{3} & -3 & \chi^{3} & [ 0, -2, -3, -5 ] & 9\\ & & & & \\[-1em] 
& -3 & \sigma\chi^{3} & [ 0, -2, -3, -5 ] & 9\\ & & & & \\[-1em] 
& -2 & \chi^1 & [ 0, -3, -5, -6 ] & 3\\ & & & & \\[-1em] 
& -1 & \chi^{3} & [ 0, -1, -4, -5 ] & 6\\ & & & & \\[-1em] 
& -1 & \sigma\chi^{3} & [ 0, -1, -4, -5 ] & 6\\ & & & & \\[-1em] 
& -1 & \chi^5 & [ 0, -1, -3, -6 ] & 3\\ & & & & \\[-1em] 
\chi^4 & -1 & \chi^4 & [ 0, -1, -4, -5 ] & 8\\ & & & & \\[-1em] 
\chi^5 & -4 & \chi^1 & [ 0, -1, -2, -4 ] & 4\\ & & & & \\[-1em] 
& -1 & \chi^1 & [ 0, -1, -2, -3 ] & 1\\ & & & & \\[-1em] 
& -1 & \chi^{3} & [ 0, -3, -5, -6 ] & 3\\ & & & & \\[-1em] 
 & -1 & \sigma\chi^{3} & [ 0, -3, -5, -6 ] & 3
  \end{array}
 \]
\end{theorem}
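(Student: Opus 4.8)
\medskip
\noindent\textbf{Proof strategy.}
The theorem will be proved by an exhaustive execution of the four-step procedure above, so the plan is mainly to organize the finite case analysis and to explain why the resulting list is complete and why supernaturality can be certified from a finite computation. First I would fix $V = \chi^5_{-1}$, so that $\Pro V \cong \Pro^4$ and, since $\bigwedge^5 W = \chi^1_5$, tensoring by $\bigwedge^5 W$ only shifts internal degrees by $5$. For Step~2 I would run through the finitely many (namely $4\cdot 5\cdot 5$) triples $(i,\varphi,\psi)$ with $i \in \{-1,-2,-3,-4\}$ and $\varphi,\psi \in X$ and decide, purely by character theory, whether $\BGG_V(\varphi_i,\psi_0)$ is a singleton: by Remark~\ref{remark:character_theroy_BGG_space} this reduces to computing the inner product of the dual character of $\bigwedge^5 W \otimes_k \varphi_i$ with the character of the degree $i+5$ part of $E \otimes_k (\bigwedge^5 W \otimes_k \psi_0)$, checking that it equals $1$, and then verifying that the generator of the one-dimensional $\Hom$-space is a monomorphism. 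The ranges $i > 0$ and $i < -5$ give empty BGG spaces, and $i = -5$ only reproduces the line bundles $\mathcal{O}_{\Pro V} \otimes \psi_0(-1)$, which are supernatural for trivial reasons; hence nothing is lost by restricting to $i \in \{-1,-2,-3,-4\}$, and we are left with a finite list of candidate triples.

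For each such singleton BGG space I would form $\widehat{\phi}$, compute $M := \kernel(\widehat{\phi})$ together with its Hilbert series, and apply the necessary condition of Corollary~\ref{corollary:necessary_condition} (Step~3). A triple failing the degree bound there is discarded, and since that condition is genuinely necessary, such a triple really does not correspond to a vector bundle up to translation and twist --- this is exactly what makes the final table complete. For the surviving triples I would compute enough terms of a minimal injective resolution of $M$ to the right and of a minimal projective resolution of $k$ --- equivalently, of the syzygy objects of $\TateRes(\mathcal{F})$ --- to the left; by Theorem~\ref{theorem:BGG_and_cohomology} these yield a finite excerpt of the cohomology table of $\mathcal{F} := \BGG(\underline{M})$. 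I would then check, case by case, that each excerpt exhibits the two sparse columns required by Lemma~\ref{lemma:sufficient_condition} up to translation and twist, so that each surviving $\mathcal{F}$ is a vector bundle, whose rank is read off from Corollary~\ref{corollary:computing_rank_and_chern}; every such triple then appears in the table.

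It remains to show that each of these vector bundles is supernatural of the listed type. I would compute its Chern polynomial via Corollary~\ref{corollary:computing_rank_and_chern} and, from the associated class in the Grothendieck group, its Hilbert polynomial $\HPo_{\mathcal{F}}$ (equivalently, via Hirzebruch--Riemann--Roch on $\Pro^4$); one then checks that $\HPo_{\mathcal{F}} = \rank(\mathcal{F}) \cdot \HPo^{\mathbf{z}}$ for the root sequence $\mathbf{z}$ recorded in the table, i.e.\ that $\HPo_{\mathcal{F}}$ has four distinct integer roots $z_1 > z_2 > z_3 > z_4$. To upgrade this to the full cohomology table I would combine two ingredients: the bound from the proof of Lemma~\ref{lemma:sufficient_condition}, which once the two sparse columns are located confines the nonzero entries to a bounded diagonal band together with the $n$-th row to the left and the $0$-th row to the right; and Serre duality and Serre vanishing, which identify those two infinite one-row tails with $|\HPo_{\mathcal{F}}|$. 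The finite excerpt, taken wide enough to cover the full transition region, then shows that $\mathcal{F}$ has natural cohomology there and that the non-vanishing pattern is exactly $z_q > p > z_{q+1}$; hence the whole table equals $\CH^q(\mathcal{F}(p)) = \rank(\mathcal{F}) \cdot \gamma^{\mathbf{z}}(p,q)$, which is supernatural cohomology of type $\mathbf{z}$ (this is the sort of argument used for such bundles in Boij--S\"oderberg theory, cf.\ \cite{ES09}).

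The one genuinely non-mechanical point, and where I expect to have to argue with some care, is the termination of Step~4: Lemma~\ref{lemma:sufficient_condition} asks for two sparse columns, and no finite excerpt can ever disprove their existence, so there is a priori no bound on how far the resolutions must be continued. In the present situation this causes no difficulty because every triple surviving Step~3 does acquire both sparse columns after a bounded number of steps, which I would verify individually; the remaining ingredients --- the character inner products, the modules $\kernel(\widehat{\phi})$ and their Hilbert series, the resolution steps, and the Chern polynomials --- are routine and were carried out with \CapPkg.
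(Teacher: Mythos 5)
Your proposal follows essentially the same route as the paper: the paper proves this theorem by exhaustively running the four boxed steps with \CapPkg (character-theoretic enumeration of singleton BGG spaces, the Hilbert-series necessary condition of Corollary~\ref{corollary:necessary_condition}, and finite cohomology-table excerpts checked against Lemma~\ref{lemma:sufficient_condition}), which is exactly what you describe. Your additional argument certifying supernaturality from a finite excerpt (Hilbert polynomial from the Grothendieck class, the band restriction from the proof of Lemma~\ref{lemma:sufficient_condition}, and the one-row tails) is a sound filling-in of a verification the paper leaves to the computation, not a different method.
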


\begin{remark}
 The Horrocks-Mumford bundle \cite{HM73}
 is also a strongly determined equivariant vector bundle
 for its group of symmetries of order $15000$,
 but it does not have supernatural cohomology.
\end{remark}

\begin{appendix}

\section{Software packages}\label{appendixa}

The main tool for our computations is the \CapPkg project (Categories, Algorithms, Programming),
which is a collection of software packages\footnote{\CapPkg's core system is part of the current \GAP release (as of October 2017).} 
written in \GAP. The \CapPkg project supports the programmer
in the implementation of specific instances of categories,
like in our case the category $(E \rtimes G)\grmod$ of finitely generated $G$-equivariant
$\Z$-graded modules over the exterior algebra $E$.

The \GAP packages which enable us to work with the category $(E \rtimes G)\grmod$
are called \texttt{InternalExteriorAlgebraForCAP} 
and \texttt{GroupRepresentationsForCAP}, and they are freely available on GitHub.\footnote{They are part
of the \texttt{CAP\_project} repository: \href{https://github.com/sebastianpos/CAP_project}{\url{https://github.com/sebastianpos/CAP_project}}.}
Up till now an installation of Magma \cite{magma} is also needed for a proper usage of our packages,
since Magma provides fast linear algebra over number fields.
It is planned to resolve this dependency by switching to open software alternatives like Nemo \cite{nemo}.

\section{Sketch: computing with equivariant modules}\label{appendix}

We briefly explain how computations within the category
$(E \rtimes G)\grmod$ become feasible on the computer.
To simplify our explanation, we will drop the $\Z$-grading,
but it is straightforward to add it to our setup in the end.
The main ideas for computing in $(E \rtimes G)\ModLeft$ are
\begin{enumerate}
 \item to reduce all computations to linear algebra,
 \item to keep the resulting matrices small by exploiting $G$-equivariance.
\end{enumerate}
The first idea is easy to implement since every finitely generated $G$-equivariant $E$-module is in particular a finite dimensional vector space.
But the resulting matrices describing the $G$-equivariant and $E$-linear operators
within injective/projective resolutions quickly become very big.
A reduction of this growth can be achieved by exploiting the $G$-equivariance using category theory
in a way that we are now going to explain.

If our ground field $k$ is a splitting field for our finite group $G$,
then, due to Maschke's theorem, the category of finite dimensional $k$-representations of $G$
(denoted by $\Rep(G)$) is equivalent as an abelian category to finitely many copies
of the category of finite dimensional $k$-vector spaces (denoted by $\kvec$),
where the copies are indexed by the irreducible representations $\Irr(G)$ of $G$:
\begin{equation}\label{equation:rep_equiv}
 \Rep(G) \simeq \bigoplus_{\chi \in \Irr(G)} \kvec. 
\end{equation}
A morphism $\alpha$ between two representations $V$ and $W$ in $\Rep(G)$
is modeled by a matrix of dimension $\dim(V) \times \dim(W)$
which, under the equivalence \eqref{equation:rep_equiv}, 
is transformed into an $\Irr(G)$-indexed list of matrices
whose $\chi$-th entry is an $m_\chi \times n_\chi$ matrix,
where $m_\chi$ and $n_\chi$ are the multiplicities of $\chi$
in $V$ and $W$, respectively. Since this list of matrices
may contain much less data then our original matrix $\alpha$,
we prefer computing in $\bigoplus_{\chi \in \Irr(G)} \kvec$ to computing in $\Rep(G)$.
But how can we model the category
$(E \rtimes G)\grmod$ in terms of $\bigoplus_{\chi \in \Irr(G)} \kvec$?
The answer lies in the concept of \emph{internalization}.

Roughly speaking, internalization means reformulating a (maybe classical set-theoretic) concept
in the language of categories equipped with some extra structure.
For example, we may speak about (internal) monoids in any category $\CC$ 
with finite products. 
Such an (internal) monoid is then defined as an object $M \in \CC$ equipped with
morphisms $\mu: M \times M \rightarrow M$ and $\eta: 1 \rightarrow M$ (where $1$
denotes the terminal object in $\CC$, which is given by the empty product) such that the diagrams
\begin{center}
          \begin{tikzpicture}[transform shape,mylabel/.style={thick, draw=black, align=center, minimum width=0.5cm, minimum height=0.5cm,fill=white}]
            \coordinate (r) at (3,0);
            \coordinate (d) at (0,-1.75);
            \node (M1) {$M \times (M \times M)$};
            \node (M2) at ($(M1) + 2*(r)$) {$(M \times M) \times M$};
            \node (B1) at ($(M1) + (d)$) {$M \times M$};
            \node (B2) at ($(M2) + (d)$) {$M \times M$};
            \node (B3) at ($(B1) + (r)$) {$M$};
            \draw[->,thick] (M1) --node[above]{$\simeq$} (M2);
            \draw[->,thick] (M1) --node[left]{$M \times \mu$} (B1);
            \draw[->,thick] (M2) --node[right]{$\mu \times M$} (B2);
            \draw[->,thick] (B1) --node[above]{$\mu$} (B3);
            \draw[->,thick] (B2) --node[above]{$\mu$} (B3);
          \end{tikzpicture}
  \end{center}
  and
  \begin{center}
          \begin{tikzpicture}[transform shape,mylabel/.style={thick, draw=black, align=center, minimum width=0.5cm, minimum height=0.5cm,fill=white}]
            \coordinate (r) at (3,0);
            \coordinate (d) at (0,-2);
            \node (M1) {$1 \times M$};
            \node (M2) at ($(M1) + (r)$) {$M \times M$};
            \node (M3) at ($(M2) + (r)$) {$M \times 1$};
            \node (B1) at ($(M2) + (d)$) {$M$};
            \draw[->,thick] (M1) --node[above]{$\eta \times M$} (M2);
            \draw[->,thick] (M3) --node[above]{$M \times \eta$} (M2);
            \draw[->,thick] (M1) --node[left, inner sep = 0.5em]{$\simeq$} (B1);
            \draw[->,thick] (M3) --node[right, inner sep = 0.5em]{$\simeq$} (B1);
            \draw[->,thick] (M2) --node[left, inner sep = 0.2em]{$\mu$} (B1);
          \end{tikzpicture}
  \end{center}
commute. If $\CC$ is the category of sets, then $M$ is a classical monoid
with multiplication map $\mu$ and unit given by the unique element in the image of $\eta$.
The strength of this definition lies in the fact that we may vary $\CC$. For example,
an (internal) monoid in the category of topological spaces is a topological monoid.
An (internal) monoid in the category of groups is an abelian group.

The diagrammatic definition of an (internal) monoid even makes sense in contexts
where we do not necessarily have products,
but simply a bifunctor $\otimes: \CC \times \CC \rightarrow \CC$,
a distinguished object $1 \in \CC$, associator isomorphisms
$A \otimes (B \otimes C) \simeq (A \otimes B) \otimes C$ and unitor isomorphisms $1 \otimes A \simeq A \otimes 1 \simeq A$ natural in $A,B,C \in \CC$
satisfying certain relations. Such categories are called monoidal categories (see \cite{MLCWM} for a proper definition).
For example, $\Rep(G)$ together with the usual tensor product $\otimes_k$ of representations is a monoidal category.
An (internal) monoid in $(\Rep(G), \otimes_k)$ is the same as a finite dimensional $G$-equivariant algebra
(like our exterior algebra $E = \bigwedge V$ in Subsection \ref{subsection:equivariant}).

Similar to the concept of a monoid, we can also internalize
the concept of a left action of a monoid $(M,\mu,\eta)$ in any monoidal category $(\CC,\otimes)$:
an (internal) left action is simply an object $V \in \CC$ equipped with a morphism $\mu_V: M \otimes V \rightarrow M$
(encoding the action of $M$ on $V$ from the left) satisfying the obvious conditions.
Again, interpreted in the category of sets, this is nothing but the usual left action
of a monoid on a set. But interpreted in $(\Rep(G), \otimes_k)$
with given (internal) monoid $E$, the category of (internal) left actions of $E$ is
nothing but $(E \rtimes G)\ModLeft$.

Now, the equivalence \eqref{equation:rep_equiv} can be used
to transfer the monoidal structure $\otimes_k$ of 
$\Rep(G)$ to a monoidal structure $\otimes'$ of $\bigoplus_{\chi \in \Irr(G)} \kvec$,
as well as to transfer the (internal) monoid $E$
to an (internal) monoid $E'$.
The upshot is: the category of (internal) left actions of $E'$
is still equivalent to $(E \rtimes G)\ModLeft$,
but it is now built upon the concise data structure that represents
$G$-equivariant maps as lists of small matrices. 
Thus, we have finally found our effective model in which we can compute
with $G$-equivariant $E$-modules.

One last remark concerning the transfer of the monoidal structure from 
$\Rep(G)$ to $\bigoplus_{\chi \in \Irr(G)} \kvec$:
the computationally hardest part is the transfer
of the associator isomorphisms. In our \CapPkg implementation
it involves Dixon's method \cite{DixonRep}
for constructing all irreducible representations of a group $G$
as well as the computation of explicit decomposition
isomorphisms of all pairs of tensor products between irreducible representations.
Luckily, for any given group $G$, such an effort has only to be done once.
We simply can store the resulting associators and reuse them
in future sessions (this applies in particular to the associators
that we computed for $G = A_5$ in order to perform the computations for this paper).

\end{appendix}

\def\cprime{$'$} \def\cprime{$'$} \def\cprime{$'$} \def\cprime{$'$}
  \def\cprime{$'$}
\providecommand{\bysame}{\leavevmode\hbox to3em{\hrulefill}\thinspace}
\providecommand{\MR}{\relax\ifhmode\unskip\space\fi MR }
\providecommand{\MRhref}[2]{%
  \href{http://www.ams.org/mathscinet-getitem?mr=#1}{#2}
}
\providecommand{\href}[2]{#2}

\end{document}